 \def\dated#1{\def\thedate{#1}}%
\newdimen\high%
\newdimen\ul%
\newdimen\wdth%
\def\ratchet#1#2{\ifnum#1<#2\global #1=#2\fi}%
\def\ifnextchar#1#2#3{\let\@tempe%
#1\def\@tempa{#2}\def\@tempb{#3}\futurelet%
    \@tempc\@ifnch}%
\def\@ifnch{\ifx \@tempc \@sptoken \let\@tempd\@xifnch%
      \else \ifx \@tempc \@tempe\let\@tempd\@tempa\else\let\@tempd\@tempb\fi%
      \fi \@tempd}%
\def\:{\let\@sptoken= } \:  
\def\:{\@xifnch} \expandafter\def\: {\futurelet\@tempc\@ifnch}%
\let\ifnextchar\@ifnextchar%
\newdimen\axis \axis=\fontdimen22\textfont2%
\def\scalefactor#1{\ul=#1\ul \X@xbase=#1\X@xbase \Y@ybase=#1\Y@ybase}%
\def\fontscale#1{%
\if#1h\relax%
\font\xydashfont=xydash10 scaled \magstephalf%
\font\xyatipfont=xyatip10 scaled \magstephalf%
\font\xybtipfont=xybtip10 scaled \magstephalf%
\font\xybsqlfont=xybsql10 scaled \magstephalf%
\font\xycircfont=xycirc10 scaled \magstephalf%
\else%
\font\xydashfont=xydash10 scaled \magstep#1%
\font\xyatipfont=xyatip10 scaled \magstep#1%
\font\xybtipfont=xybtip10 scaled \magstep#1%
\font\xybsqlfont=xybsql10 scaled \magstep#1%
\font\xycircfont=xycirc10 scaled \magstep#1%
\fi}%
\def\bfig{\vcenter\bgroup\xy}%
\def\efig{\endxy\egroup}%
\def\car#1#2\nil{#1}%
\def\morphism{\ifnextchar({\morphismp}{\morphismp(0,0)}}%
\def\morphismp(#1){\ifnextchar|{\morphismpp(#1)}{\morphismpp(#1)|a|}}%
\def\morphismpp(#1)|#2|{\ifnextchar/{\morphismppp(#1)|#2|}%
    {\morphismppp(#1)|#2|/>/}}%
\def\morphismppp(#1)|#2|/#3/{%
    \ifnextchar<{\morphismpppp(#1)|#2|/#3/}%
    {\morphismpppp(#1)|#2|/#3/<\default,0>}}%
\def\morphismpppp(#1,#2)|#3|/#4/<#5,#6>[#7`#8;#9]{%
\xend#1\advance \xend by #5%
\yend#2\advance \yend by #6%
\domorphism(#1,#2)|#3|/#4/<#5,#6>[{#7}`{#8};{#9}]}%
\def\domorphism(#1,#2)|#3|/#4/<#5,#6>[#7`#8;#9]{%
\def\next{\car#4.\nil}%
\if@\next\relax%
 \if#3l%
  \ifnum #6>0%
   \POS(#1,#2)*+!!<0ex,\axis>{#7}\ar#4^-{#9} (\xend,\yend)*+!!<0ex,\axis>{#8}%
  \else%
   \POS(#1,#2)*+!!<0ex,\axis>{#7}\ar#4_-{#9} (\xend,\yend)*+!!<0ex,\axis>{#8}%
  \fi%
 \else \if#3m%
    \setbox0\hbox{$#9$}%
   \ifdim \wd0=0pt%
     \POS(#1,#2)*+!!<0ex,\axis>{#7}\ar#4 (\xend,\yend)*+!!<0ex,\axis>{#8}%
   \else%
     \POS(#1,#2)*+!!<0ex,\axis>{#7}\ar#4|-*+<1pt,4pt>{\labelstyle#9}%
       (\xend,\yend)*+!!<0ex,\axis>{#8}%
   \fi%
 \else \if#3r%
  \ifnum #6<0%
   \POS(#1,#2)*+!!<0ex,\axis>{#7}\ar#4^-{#9} (\xend,\yend)*+!!<0ex,\axis>{#8}%
  \else%
   \POS(#1,#2)*+!!<0ex,\axis>{#7}\ar#4_-{#9} (\xend,\yend)*+!!<0ex,\axis>{#8}%
  \fi%
 \else \if#3a%
  \ifnum #5>0%
   \POS(#1,#2)*+!!<0ex,\axis>{#7}\ar#4^-{#9} (\xend,\yend)*+!!<0ex,\axis>{#8}%
  \else%
   \POS(#1,#2)*+!!<0ex,\axis>{#7}\ar#4_-{#9} (\xend,\yend)*+!!<0ex,\axis>{#8}%
  \fi%
 \else \if#3b%
  \ifnum #5<0%
   \POS(#1,#2)*+!!<0ex,\axis>{#7}\ar#4^-{#9} (\xend,\yend)*+!!<0ex,\axis>{#8}%
  \else%
   \POS(#1,#2)*+!!<0ex,\axis>{#7}\ar#4_-{#9} (\xend,\yend)*+!!<0ex,\axis>{#8}%
  \fi%
 \else%
   \POS(#1,#2)*+!!<0ex,\axis>{#7}\ar#4 (\xend,\yend)*+!!<0ex,\axis>{#8}%
 \fi\fi\fi\fi\fi%
\else%
 \if#3l%
  \ifnum #6>0%
   \POS(#1,#2)*+!!<0ex,\axis>{#7}\ar@{#4}^-{#9} (\xend,\yend)*+!!<0ex,\axis>{#8}%
  \else%
   \POS(#1,#2)*+!!<0ex,\axis>{#7}\ar@{#4}_-{#9} (\xend,\yend)*+!!<0ex,\axis>{#8}%
  \fi%
 \else \if#3m%
    \setbox0\hbox{$#9$}%
   \ifdim \wd0=0pt%
     \POS(#1,#2)*+!!<0ex,\axis>{#7}\ar@{#4} (\xend,\yend)*+!!<0ex,\axis>{#8}%
   \else%
     \POS(#1,#2)*+!!<0ex,\axis>{#7}\ar@{#4}|-*+<1pt,4pt>{\labelstyle#9}%
         (\xend,\yend)*+!!<0ex,\axis>{#8}%
   \fi%
 \else \if#3r%
  \ifnum #6<0%
   \POS(#1,#2)*+!!<0ex,\axis>{#7}\ar@{#4}^-{#9} (\xend,\yend)*+!!<0ex,\axis>{#8}%
  \else%
   \POS(#1,#2)*+!!<0ex,\axis>{#7}\ar@{#4}_-{#9} (\xend,\yend)*+!!<0ex,\axis>{#8}%
  \fi%
 \else \if#3a%
  \ifnum #5>0%
   \POS(#1,#2)*+!!<0ex,\axis>{#7}\ar@{#4}^-{#9} (\xend,\yend)*+!!<0ex,\axis>{#8}%
  \else%
   \POS(#1,#2)*+!!<0ex,\axis>{#7}\ar@{#4}_-{#9} (\xend,\yend)*+!!<0ex,\axis>{#8}%
  \fi%
 \else \if#3b%
  \ifnum #5<0%
   \POS(#1,#2)*+!!<0ex,\axis>{#7}\ar@{#4}^-{#9} (\xend,\yend)*+!!<0ex,\axis>{#8}%
  \else%
   \POS(#1,#2)*+!!<0ex,\axis>{#7}\ar@{#4}_-{#9} (\xend,\yend)*+!!<0ex,\axis>{#8}%
  \fi%
 \else%
   \POS(#1,#2)*+!!<0ex,\axis>{#7}\ar@{#4} (\xend,\yend)*+!!<0ex,\axis>{#8}%
 \fi\fi\fi\fi\fi%
\fi\ignorespaces}%
\def\vect(#1,#2)/#3/<#4,#5>{%
 \xend#1 \yend#2 \advance\xend by #4 \advance\yend by #5%
     \POS(#1,#2)\ar#3 (\xend,\yend)}%
\def\squarepppp(#1,#2)|#3|/#4`#5`#6`#7/<#8>[#9]{%
\xpos#1\ypos#2%
\def\next|##1##2##3##4|{%
 \def\xa{##1}\def\xb{##2}\def\xc{##3}\def\xd{##4}\ignorespaces}%
\next|#3|%
\def\next<##1,##2>{\deltax=##1\deltay=##2\ignorespaces}%
\next<#8>%
\def\next[##1`##2`##3`##4;##5`##6`##7`##8]{%
    \def\nodea{##1}\def\nodeb{##2}\def\nodec{##3}\def\noded{##4}%
    \def\labela{##5}\def\labelb{##6}\def\labelc{##7}\def\labeld{##8}\ignorespaces}%
\next[#9]%
\morphism(\xpos,\ypos)|\xd|/{#7}/<\deltax,0>[\nodec`\noded;\labeld]%
\advance \ypos by \deltay%
\morphism(\xpos,\ypos)|\xb|/{#5}/<0,-\deltay>[\nodea`\nodec;\labelb]%
\morphism(\xpos,\ypos)|\xa|/{#4}/<\deltax,0>[\nodea`\nodeb;\labela]%
 \advance \xpos by \deltax%
\morphism(\xpos,\ypos)|\xc|/{#6}/<0,-\deltay>[\nodeb`\noded;\labelc]%
\ignorespaces}%
\def\square{\ifnextchar({\squarep}{\squarep(0,0)}}%
\def\squarep(#1){\ifnextchar|{\squarepp(#1)}{\squarepp(#1)|alrb|}}%
\def\squarepp(#1)|#2|{\ifnextchar/{\squareppp(#1)|#2|}%
    {\squareppp(#1)|#2|/>`>`>`>/}}%
\def\squareppp(#1)|#2|/#3`#4`#5`#6/{%
    \ifnextchar<{\squarepppp(#1)|#2|/#3`#4`#5`#6/}%
    {\squarepppp(#1)|#2|/#3`#4`#5`#6/<\default,\default>}}%
\def\ptrianglepppp(#1,#2)|#3|/#4`#5`#6/<#7>[#8]{%
\xpos#1\ypos#2%
\def\next|##1##2##3|{\def\xa{##1}\def\xb{##2}\def\xc{##3}}%
\next|#3|%
\def\next<##1,##2>{\deltax=##1\deltay=##2\ignorespaces}%
\next<#7>%
\def\next[##1`##2`##3;##4`##5`##6]{%
    \def\nodea{##1}\def\nodeb{##2}\def\nodec{##3}%
    \def\labela{##4}\def\labelb{##5}\def\labelc{##6}}%
\next[#8]%
\advance\ypos by \deltay%
\morphism(\xpos,\ypos)|\xa|/{#4}/<\deltax,0>[\nodea`\nodeb;\labela]%
\morphism(\xpos,\ypos)|\xb|/{#5}/<0,-\deltay>[\nodea`\nodec;\labelb]%
\advance\xpos by \deltax%
\morphism(\xpos,\ypos)|\xc|/{#6}/<-\deltax,-\deltay>[\nodeb`\nodec;\labelc]%
\ignorespaces}%
\def\qtrianglepppp(#1,#2)|#3|/#4`#5`#6/<#7>[#8]{%
\xpos#1\ypos#2%
\def\next|##1##2##3|{\def\xa{##1}\def\xb{##2}\def\xc{##3}}%
\next|#3|%
\def\next<##1,##2>{\deltax=##1\deltay=##2\ignorespaces}%
\next<#7>%
\def\next[##1`##2`##3;##4`##5`##6]{%
    \def\nodea{##1}\def\nodeb{##2}\def\nodec{##3}%
    \def\labela{##4}\def\labelb{##5}\def\labelc{##6}}%
\next[#8]%
\advance\ypos by \deltay%
\morphism(\xpos,\ypos)|\xa|/{#4}/<\deltax,0>[\nodea`\nodeb;\labela]%
\morphism(\xpos,\ypos)|\xb|/{#5}/<\deltax,-\deltay>[\nodea`\nodec;\labelb]%
\advance\xpos by \deltax%
\morphism(\xpos,\ypos)|\xc|/{#6}/<0,-\deltay>[\nodeb`\nodec;\labelc]%
\ignorespaces}%
\def\dtrianglepppp(#1,#2)|#3|/#4`#5`#6/<#7>[#8]{%
\xpos#1\ypos#2%
\def\next|##1##2##3|{\def\xa{##1}\def\xb{##2}\def\xc{##3}}%
\next|#3|%
\def\next<##1,##2>{\deltax=##1\deltay=##2\ignorespaces}%
\next<#7>%
\def\next[##1`##2`##3;##4`##5`##6]{%
    \def\nodea{##1}\def\nodeb{##2}\def\nodec{##3}%
    \def\labela{##4}\def\labelb{##5}\def\labelc{##6}}%
\next[#8]%
\morphism(\xpos,\ypos)|\xc|/{#6}/<\deltax,0>[\nodeb`\nodec;\labelc]%
\advance\ypos by \deltay\advance \xpos by \deltax%
\morphism(\xpos,\ypos)|\xa|/{#4}/<-\deltax,-\deltay>[\nodea`\nodeb;\labela]%
\morphism(\xpos,\ypos)|\xb|/{#5}/<0,-\deltay>[\nodea`\nodec;\labelb]%
\ignorespaces}%
\def\btrianglepppp(#1,#2)|#3|/#4`#5`#6/<#7>[#8]{%
\xpos#1\ypos#2%
\def\next|##1##2##3|{\def\xa{##1}\def\xb{##2}\def\xc{##3}}%
\next|#3|%
\def\next<##1,##2>{\deltax=##1\deltay=##2\ignorespaces}%
\next<#7>%
\def\next[##1`##2`##3;##4`##5`##6]{%
    \def\nodea{##1}\def\nodeb{##2}\def\nodec{##3}%
    \def\labela{##4}\def\labelb{##5}\def\labelc{##6}}%
\next[#8]%
\morphism(\xpos,\ypos)|\xc|/{#6}/<\deltax,0>[\nodeb`\nodec;\labelc]%
\advance\ypos by \deltay%
\morphism(\xpos,\ypos)|\xa|/{#4}/<0,-\deltay>[\nodea`\nodeb;\labela]%
\morphism(\xpos,\ypos)|\xb|/{#5}/<\deltax,-\deltay>[\nodea`\nodec;\labelb]%
\ignorespaces}%
\def\Atrianglepppp(#1,#2)|#3|/#4`#5`#6/<#7>[#8]{%
\xpos#1\ypos#2%
\def\next|##1##2##3|{\def\xa{##1}\def\xb{##2}\def\xc{##3}}%
\next|#3|%
\def\next<##1,##2>{\deltax=##1\deltay=##2\ignorespaces}%
\next<#7>%
\def\next[##1`##2`##3;##4`##5`##6]{%
    \def\nodea{##1}\def\nodeb{##2}\def\nodec{##3}%
    \def\labela{##4}\def\labelb{##5}\def\labelc{##6}}%
\next[#8]%
\multiply\deltax by 2%
\morphism(\xpos,\ypos)|\xc|/{#6}/<\deltax,0>[\nodeb`\nodec;\labelc]%
\divide\deltax by 2%
\advance\ypos by \deltay\advance\xpos by \deltax%
\morphism(\xpos,\ypos)|\xa|/{#4}/<-\deltax,-\deltay>[\nodea`\nodeb;\labela]%
\morphism(\xpos,\ypos)|\xb|/{#5}/<\deltax,-\deltay>[\nodea`\nodec;\labelb]%
\ignorespaces}%
\def\Vtrianglepppp(#1,#2)|#3|/#4`#5`#6/<#7>[#8]{%
\xpos#1\ypos#2%
\def\next|##1##2##3|{\def\xa{##1}\def\xb{##2}\def\xc{##3}}%
\next|#3|%
\def\next<##1,##2>{\deltax=##1\deltay=##2\ignorespaces}%
\next<#7>%
\def\next[##1`##2`##3;##4`##5`##6]{%
    \def\nodea{##1}\def\nodeb{##2}\def\nodec{##3}%
    \def\labela{##4}\def\labelb{##5}\def\labelc{##6}}%
\next[#8]%
\advance\ypos by \deltay%
\morphism(\xpos,\ypos)|\xb|/{#5}/<\deltax,-\deltay>[\nodea`\nodec;\labelb]%
\multiply\deltax by 2%
\morphism(\xpos,\ypos)|\xa|/{#4}/<\deltax,0>[\nodea`\nodeb;\labela]%
\advance\xpos by \deltax \divide \deltax by 2%
\morphism(\xpos,\ypos)|\xc|/{#6}/<-\deltax,-\deltay>[\nodeb`\nodec;\labelc]%
\ignorespaces}%
\def\Ctrianglepppp(#1,#2)|#3|/#4`#5`#6/<#7>[#8]{%
\xpos#1\ypos#2%
\def\next|##1##2##3|{\def\xa{##1}\def\xb{##2}\def\xc{##3}}%
\next|#3|%
\def\next<##1,##2>{\deltax=##1\deltay=##2\ignorespaces}%
\next<#7>%
\def\next[##1`##2`##3;##4`##5`##6]{%
    \def\nodea{##1}\def\nodeb{##2}\def\nodec{##3}%
    \def\labela{##4}\def\labelb{##5}\def\labelc{##6}}%
\next[#8]%
\advance \ypos by \deltay%
\morphism(\xpos,\ypos)|\xc|/{#6}/<\deltax,-\deltay>[\nodeb`\nodec;\labelc]%
\advance\ypos by \deltay \advance \xpos by \deltax%
\morphism(\xpos,\ypos)|\xa|/{#4}/<-\deltax,-\deltay>[\nodea`\nodeb;\labela]%
\multiply\deltay by 2%
\morphism(\xpos,\ypos)|\xb|/{#5}/<0,-\deltay>[\nodea`\nodec;\labelb]%
\ignorespaces}%
\def\Dtrianglepppp(#1,#2)|#3|/#4`#5`#6/<#7>[#8]{%
\xpos#1\ypos#2%
\def\next|##1##2##3|{\def\xa{##1}\def\xb{##2}\def\xc{##3}}%
\next|#3|%
\def\next<##1,##2>{\deltax=##1\deltay=##2\ignorespaces}%
\next<#7>%
\def\next[##1`##2`##3;##4`##5`##6]{%
    \def\nodea{##1}\def\nodeb{##2}\def\nodec{##3}%
    \def\labela{##4}\def\labelb{##5}\def\labelc{##6}}%
\next[#8]%
\advance\xpos by \deltax \advance\ypos by \deltay%
\morphism(\xpos,\ypos)|\xc|/{#6}/<-\deltax,-\deltay>[\nodeb`\nodec;\labelc]%
\advance\xpos by -\deltax \advance\ypos by \deltay%
\morphism(\xpos,\ypos)|\xb|/{#5}/<\deltax,-\deltay>[\nodea`\nodeb;\labelb]%
\multiply \deltay by 2%
\morphism(\xpos,\ypos)|\xa|/{#4}/<0,-\deltay>[\nodea`\nodec;\labela]%
\ignorespaces}%
\def\ptrianglep(#1){\ifnextchar|{\ptrianglepp(#1)}{\ptrianglepp(#1)|alr|}}%
\def\ptrianglepp(#1)|#2|{\ifnextchar/{\ptriangleppp(#1)|#2|}%
    {\ptriangleppp(#1)|#2|/>`>`>/}}%
\def\ptriangleppp(#1)|#2|/#3`#4`#5/{%
    \ifnextchar<{\ptrianglepppp(#1)|#2|/#3`#4`#5/}%
    {\ptrianglepppp(#1)|#2|/#3`#4`#5/<\default,\default>}}%
\def\qtrianglep(#1){\ifnextchar|{\qtrianglepp(#1)}{\qtrianglepp(#1)|alr|}}%
\def\qtrianglepp(#1)|#2|{\ifnextchar/{\qtriangleppp(#1)|#2|}%
    {\qtriangleppp(#1)|#2|/>`>`>/}}%
\def\qtriangleppp(#1)|#2|/#3`#4`#5/{%
    \ifnextchar<{\qtrianglepppp(#1)|#2|/#3`#4`#5/}%
    {\qtrianglepppp(#1)|#2|/#3`#4`#5/<\default,\default>}}%
\def\dtrianglep(#1){\ifnextchar|{\dtrianglepp(#1)}{\dtrianglepp(#1)|lrb|}}%
\def\dtrianglepp(#1)|#2|{\ifnextchar/{\dtriangleppp(#1)|#2|}%
    {\dtriangleppp(#1)|#2|/>`>`>/}}%
\def\dtriangleppp(#1)|#2|/#3`#4`#5/{%
    \ifnextchar<{\dtrianglepppp(#1)|#2|/#3`#4`#5/}%
    {\dtrianglepppp(#1)|#2|/#3`#4`#5/<\default,\default>}}%
\def\btrianglep(#1){\ifnextchar|{\btrianglepp(#1)}{\btrianglepp(#1)|lrb|}}%
\def\btrianglepp(#1)|#2|{\ifnextchar/{\btriangleppp(#1)|#2|}%
    {\btriangleppp(#1)|#2|/>`>`>/}}%
\def\btriangleppp(#1)|#2|/#3`#4`#5/{%
    \ifnextchar<{\btrianglepppp(#1)|#2|/#3`#4`#5/}%
    {\btrianglepppp(#1)|#2|/#3`#4`#5/<\default,\default>}}%
\def\Atrianglep(#1){\ifnextchar|{\Atrianglepp(#1)}{\Atrianglepp(#1)|lrb|}}%
\def\Atrianglepp(#1)|#2|{\ifnextchar/{\Atriangleppp(#1)|#2|}%
    {\Atriangleppp(#1)|#2|/>`>`>/}}%
\def\Atriangleppp(#1)|#2|/#3`#4`#5/{%
    \ifnextchar<{\Atrianglepppp(#1)|#2|/#3`#4`#5/}%
    {\Atrianglepppp(#1)|#2|/#3`#4`#5/<\default,\default>}}%
\def\Vtrianglep(#1){\ifnextchar|{\Vtrianglepp(#1)}{\Vtrianglepp(#1)|alb|}}%
\def\Vtrianglepp(#1)|#2|{\ifnextchar/{\Vtriangleppp(#1)|#2|}%
    {\Vtriangleppp(#1)|#2|/>`>`>/}}%
\def\Vtriangleppp(#1)|#2|/#3`#4`#5/{%
    \ifnextchar<{\Vtrianglepppp(#1)|#2|/#3`#4`#5/}%
    {\Vtrianglepppp(#1)|#2|/#3`#4`#5/<\default,\default>}}%
\def\Ctrianglep(#1){\ifnextchar|{\Ctrianglepp(#1)}{\Ctrianglepp(#1)|arb|}}%
\def\Ctrianglepp(#1)|#2|{\ifnextchar/{\Ctriangleppp(#1)|#2|}%
    {\Ctriangleppp(#1)|#2|/>`>`>/}}%
\def\Ctriangleppp(#1)|#2|/#3`#4`#5/{%
    \ifnextchar<{\Ctrianglepppp(#1)|#2|/#3`#4`#5/}%
    {\Ctrianglepppp(#1)|#2|/#3`#4`#5/<\default,\default>}}%
\def\Dtrianglep(#1){\ifnextchar|{\Dtrianglepp(#1)}{\Dtrianglepp(#1)|alb|}}%
\def\Dtrianglepp(#1)|#2|{\ifnextchar/{\Dtriangleppp(#1)|#2|}%
    {\Dtriangleppp(#1)|#2|/>`>`>/}}%
\def\Dtriangleppp(#1)|#2|/#3`#4`#5/{%
    \ifnextchar<{\Dtrianglepppp(#1)|#2|/#3`#4`#5/}%
    {\Dtrianglepppp(#1)|#2|/#3`#4`#5/<\default,\default>}}%
\def\Atrianglepairpppp(#1)|#2|/#3`#4`#5`#6`#7/<#8>[#9]{%
\def\next(##1,##2){\xpos##1\ypos##2}%
\next(#1)%
\def\next|##1##2##3##4##5|{\def\xa{##1}\def\xb{##2}%
\def\xc{##3}\def\xd{##4}\def\xe{##5}}%
\next|#2|%
\def\next<##1,##2>{\deltax=##1\deltay=##2\ignorespaces}%
\next<#8>%
\def\next[##1`##2`##3`##4;##5`##6`##7`##8`##9]{%
 \def\nodea{##1}\def\nodeb{##2}\def\nodec{##3}\def\noded{##4}%
 \def\labela{##5}\def\labelb{##6}\def\labelc{##7}\def\labeld{##8}\def\labele{##9}}%
\next[#9]%
\morphism(\xpos,\ypos)|\xd|/{#6}/<\deltax,0>[\nodeb`\nodec;\labeld]%
\advance\xpos by \deltax%
\morphism(\xpos,\ypos)|\xe|/{#7}/<\deltax,0>[\nodec`\noded;\labele]%
\advance\ypos by \deltay%
\morphism(\xpos,\ypos)|\xa|/{#3}/<-\deltax,-\deltay>[\nodea`\nodeb;\labela]%
\morphism(\xpos,\ypos)|\xb|/{#4}/<0,-\deltay>[\nodea`\nodec;\labelb]%
\morphism(\xpos,\ypos)|\xc|/{#5}/<\deltax,-\deltay>[\nodea`\noded;\labelc]%
\ignorespaces}%
\def\Vtrianglepairpppp(#1)|#2|/#3`#4`#5`#6`#7/<#8>[#9]{%
\def\next(##1,##2){\xpos##1\ypos##2}%
\next(#1)%
\def\next|##1##2##3##4##5|{\def\xa{##1}\def\xb{##2}%
\def\xc{##3}\def\xd{##4}\def\xe{##5}}%
\next|#2|%
\def\next<##1,##2>{\deltax=##1\deltay=##2\ignorespaces}%
\next<#8>%
\def\next[##1`##2`##3`##4;##5`##6`##7`##8`##9]{%
 \def\nodea{##1}\def\nodeb{##2}\def\nodec{##3}\def\noded{##4}%
 \def\labela{##5}\def\labelb{##6}\def\labelc{##7}\def\labeld{##8}\def\labele{##9}}%
\next[#9]%
\advance\ypos by \deltay%
\morphism(\xpos,\ypos)|\xa|/{#3}/<\deltax,0>[\nodea`\nodeb;\labela]%
\morphism(\xpos,\ypos)|\xc|/{#5}/<\deltax,-\deltay>[\nodea`\noded;\labelc]%
\advance\xpos by \deltax%
\morphism(\xpos,\ypos)|\xb|/{#4}/<\deltax,0>[\nodeb`\nodec;\labelb]%
\morphism(\xpos,\ypos)|\xd|/{#6}/<0,-\deltay>[\nodeb`\noded;\labeld]%
\advance\xpos by \deltax%
\morphism(\xpos,\ypos)|\xe|/{#7}/<-\deltax,-\deltay>[\nodec`\noded;\labele]%
\ignorespaces}%
\def\Ctrianglepairpppp(#1)|#2|/#3`#4`#5`#6`#7/<#8>[#9]{%
\def\next(##1,##2){\xpos##1\ypos##2}%
\next(#1)%
\def\next|##1##2##3##4##5|{\def\xa{##1}\def\xb{##2}%
\def\xc{##3}\def\xd{##4}\def\xe{##5}}%
\next|#2|%
\def\next<##1,##2>{\deltax=##1\deltay=##2\ignorespaces}%
\next<#8>%
\def\next[##1`##2`##3`##4;##5`##6`##7`##8`##9]{%
 \def\nodea{##1}\def\nodeb{##2}\def\nodec{##3}\def\noded{##4}%
 \def\labela{##5}\def\labelb{##6}\def\labelc{##7}\def\labeld{##8}\def\labele{##9}}%
\next[#9]%
\advance\ypos by \deltay%
\morphism(\xpos,\ypos)|\xe|/{#7}/<0,-\deltay>[\nodec`\noded;\labele]%
\advance\xpos by -\deltax%
\morphism(\xpos,\ypos)|\xc|/{#5}/<\deltax,0>[\nodeb`\nodec;\labelc]%
\morphism(\xpos,\ypos)|\xd|/{#6}/<\deltax,-\deltay>[\nodeb`\noded;\labeld]%
\advance\ypos by \deltay%
\advance\xpos by \deltax%
\morphism(\xpos,\ypos)|\xa|/{#3}/<-\deltax,-\deltay>[\nodea`\nodeb;\labela]%
\morphism(\xpos,\ypos)|\xb|/{#4}/<0,-\deltay>[\nodea`\nodec;\labelb]%
\ignorespaces}%
\def\Dtrianglepairpppp(#1)|#2|/#3`#4`#5`#6`#7/<#8>[#9]{%
\def\next(##1,##2){\xpos##1\ypos##2}%
\next(#1)%
\def\next|##1##2##3##4##5|{\def\xa{##1}\def\xb{##2}%
\def\xc{##3}\def\xd{##4}\def\xe{##5}}%
\next|#2|%
\def\next<##1,##2>{\deltax=##1\deltay=##2\ignorespaces}%
\next<#8>%
\def\next[##1`##2`##3`##4;##5`##6`##7`##8`##9]{%
 \def\nodea{##1}\def\nodeb{##2}\def\nodec{##3}\def\noded{##4}%
 \def\labela{##5}\def\labelb{##6}\def\labelc{##7}\def\labeld{##8}\def\labele{##9}}%
\next[#9]%
\advance\ypos by \deltay%
\morphism(\xpos,\ypos)|\xc|/{#5}/<\deltax,0>[\nodeb`\nodec;\labelc]%
\morphism(\xpos,\ypos)|\xd|/{#6}/<0,-\deltay>[\nodeb`\noded;\labeld]%
\advance\ypos by \deltay%
\morphism(\xpos,\ypos)|\xa|/{#3}/<0,-\deltay>[\nodea`\nodeb;\labela]%
\morphism(\xpos,\ypos)|\xb|/{#4}/<\deltax,-\deltay>[\nodea`\nodec;\labelb]%
\advance\ypos by -\deltay%
\advance\xpos by \deltax%
\morphism(\xpos,\ypos)|\xe|/{#7}/<-\deltax,-\deltay>[\nodec`\noded;\labele]%
\ignorespaces}%
\def\Atrianglepairp(#1){\ifnextchar|{\Atrianglepairpp(#1)}%
{\Atrianglepairpp(#1)|lmrbb|}}%
\def\Atrianglepairpp(#1)|#2|{\ifnextchar/{\Atrianglepairppp(#1)|#2|}%
    {\Atrianglepairppp(#1)|#2|/>`>`>`>`>/}}%
\def\Atrianglepairppp(#1)|#2|/#3`#4`#5`#6`#7/{%
    \ifnextchar<{\Atrianglepairpppp(#1)|#2|/#3`#4`#5`#6`#7/}%
    {\Atrianglepairpppp(#1)|#2|/#3`#4`#5`#6`#7/<\default,\default>}}%
\def\Vtrianglepairp(#1){\ifnextchar|{\Vtrianglepairpp(#1)}%
{\Vtrianglepairpp(#1)|aalmr|}}%
\def\Vtrianglepairpp(#1)|#2|{\ifnextchar/{\Vtrianglepairppp(#1)|#2|}%
    {\Vtrianglepairppp(#1)|#2|/>`>`>`>`>/}}%
\def\Vtrianglepairppp(#1)|#2|/#3`#4`#5`#6`#7/{%
    \ifnextchar<{\Vtrianglepairpppp(#1)|#2|/#3`#4`#5`#6`#7/}%
    {\Vtrianglepairpppp(#1)|#2|/#3`#4`#5`#6`#7/<\default,\default>}}%
\def\Ctrianglepairp(#1){\ifnextchar|{\Ctrianglepairpp(#1)}%
{\Ctrianglepairpp(#1)|lrmlr|}}%
\def\Ctrianglepairpp(#1)|#2|{\ifnextchar/{\Ctrianglepairppp(#1)|#2|}%
    {\Ctrianglepairppp(#1)|#2|/>`>`>`>`>/}}%
\def\Ctrianglepairppp(#1)|#2|/#3`#4`#5`#6`#7/{%
    \ifnextchar<{\Ctrianglepairpppp(#1)|#2|/#3`#4`#5`#6`#7/}%
    {\Ctrianglepairpppp(#1)|#2|/#3`#4`#5`#6`#7/<\default,\default>}}%
\def\Dtrianglepairp(#1){\ifnextchar|{\Dtrianglepairpp(#1)}%
{\Dtrianglepairpp(#1)|lrmlr|}}%
\def\Dtrianglepairpp(#1)|#2|{\ifnextchar/{\Dtrianglepairppp(#1)|#2|}%
    {\Dtrianglepairppp(#1)|#2|/>`>`>`>`>/}}%
\def\Dtrianglepairppp(#1)|#2|/#3`#4`#5`#6`#7/{%
    \ifnextchar<{\Dtrianglepairpppp(#1)|#2|/#3`#4`#5`#6`#7/}%
    {\Dtrianglepairpppp(#1)|#2|/#3`#4`#5`#6`#7/<\default,\default>}}%
\def\pplace[#1](#2,#3)[#4]{\POS(#2,#3)*+!!<0ex,\axis>!#1{#4}\ignorespaces}%
\def\cplace(#1,#2)[#3]{\POS(#1,#2)*+!!<0ex,\axis>{#3}\ignorespaces}%
\def\pullback#1]#2]{\square#1]\trident#2]\ignorespaces}%
\def\tridentppp|#1#2#3|/#4`#5`#6/<#7,#8>[#9]{%
\def\next[##1;##2`##3`##4]{\def\nodee{##1}\def\labele{##2}%
   \def\labelf{##3}\def\labelg{##4}}%
\next[#9]%
\advance \xpos by -\deltax%
\advance \xpos by -#7\advance \ypos by #8%
\advance\deltax by #7%
\morphism(\xpos,\ypos)|#1|/{#4}/<\deltax,-#8>[\nodee`\nodeb;\labele]%
\advance\deltax by -#7%
\morphism(\xpos,\ypos)|#2|/{#5}/<#7,-#8>[\nodee`\nodea;\labelf]%
\advance\deltay by #8%
\morphism(\xpos,\ypos)|#3|/{#6}/<#7,-\deltay>[\nodee`\nodec;\labelg]%
\ignorespaces}%
\def\trident{\ifnextchar|{\tridentp}{\tridentp|amb|}}%
\def\tridentp|#1|{\ifnextchar/{\tridentpp|#1|}{\tridentpp|#1|/{>}`{>}`{>}/}}%
\def\tridentpp|#1|/#2/{\ifnextchar<{\tridentppp|#1|/#2/}%
  {\tridentppp|#1|/#2/<500,500>}}%
\def\setmorphismwidth#1#2#3#4{%
 \setbox0=\hbox{$#1{\labelstyle#3#3}#2$}#4=\wd0%
 \divide #4 by 2 \divide #4 by \ul%
 \advance #4 by 350 \ratchet{#4}{500}}%
\def\setSquarewidth[#1`#2`#3`#4;#5`#6`#7`#8]{%
 \setmorphismwidth{#1}{#2}{#5}{\topw}%
 \setmorphismwidth{#3}{#4}{#8}{\botw}%
\ratchet{\topw}{\botw}}%
\def\Squarepppp(#1)|#2|/#3/<#4>[#5]{%
 \setSquarewidth[#5]%
 \squarepppp(#1)|#2|/#3/<\topw,#4>[#5]%
\ignorespaces}%
\def\Squarep(#1){\ifnextchar|{\Squarepp(#1)}{\Squarepp(#1)|alrb|}}%
\def\Squarepp(#1)|#2|{\ifnextchar/{\Squareppp(#1)|#2|}%
    {\Squareppp(#1)|#2|/>`>`>`>/}}%
\def\Squareppp(#1)|#2|/#3`#4`#5`#6/{%
    \ifnextchar<{\Squarepppp(#1)|#2|/#3`#4`#5`#6/}%
    {\Squarepppp(#1)|#2|/#3`#4`#5`#6/<\default>}}%
\def\hsquarespppp(#1,#2)|#3|/#4/<#5>[#6;#7]{%
\Xpos=#1\Ypos=#2%
\def\next|##1##2##3##4##5##6##7|{%
 \def\Xa{##1}\def\Xb{##2}\def\Xc{##3}\def\Xd{##4}%
 \def\Xe{##5}\def\Xf{##6}\def\Xg{##7}}%
\next|#3|%
\def\next<##1,##2,##3>{\deltaX=##1 \deltaXprime=##2 \deltaY=##3}%
\next<#5>%
\def\next[##1`##2`##3`##4`##5`##6]{%
 \def\Nodea{##1}\def\Nodeb{##2}\def\Nodec{##3}%
 \def\Noded{##4}\def\Nodee{##5}\def\Nodef{##6}}%
\next[#6]%
\def\next[##1`##2`##3`##4`##5`##6`##7]{%
 \def\Labela{##1}\def\Labelb{##2}\def\Labelc{##3}\def\Labeld{##4}%
 \def\Labele{##5}\def\Labelf{##6}\def\Labelg{##7}}%
\next[#7]%
\dohsquares/#4/}%
\def\dohsquares/#1`#2`#3`#4`#5`#6`#7/{%
\squarepppp(\Xpos,\Ypos)|\Xa\Xc\Xd\Xf|/#1`#3`#4`#6/<\deltaX,\deltaY>%
 [\Nodea`\Nodeb`\Noded`\Nodee;\Labela`\Labelc`\Labeld`\Labelf]%
 \advance \Xpos by \deltaX%
\squarepppp(\Xpos,\Ypos)|\Xb\Xd\Xe\Xg|/#2``#5`#7/<\deltaXprime,\deltaY>%
[\Nodeb`\Nodec`\Nodee`\Nodef;\Labelb``\Labele`\Labelg]%
\ignorespaces}%
\def\hsquaresp(#1){\ifnextchar|{\hsquarespp(#1)}{\hsquarespp%
(#1)|aalmrbb|}}%
\def\hsquarespp(#1)|#2|{\ifnextchar/{\hsquaresppp(#1)|#2|}%
    {\hsquaresppp(#1)|#2|/>`>`>`>`>`>`>/}}%
\def\hsquaresppp(#1)|#2|/#3/{%
    \ifnextchar<{\hsquarespppp(#1)|#2|/#3/}%
    {\hsquarespppp(#1)|#2|/#3/<\default,\default,\default>}}%
\def\hSquarespppp(#1,#2)|#3|/#4/<#5>[#6;#7]{%
\Xpos=#1\Ypos=#2%
\def\next|##1##2##3##4##5##6##7|{%
 \def\Xa{##1}\def\Xb{##2}\def\Xc{##3}\def\Xd{##4}%
 \def\Xe{##5}\def\Xf{##6}\def\Xg{##7}}%
\next|#3|%
\deltaY=#5%
\def\next[##1`##2`##3`##4`##5`##6]{%
 \def\Nodea{##1}\def\Nodeb{##2}\def\Nodec{##3}%
 \def\Noded{##4}\def\Nodee{##5}\def\Nodef{##6}}%
\next[#6]%
\def\next[##1`##2`##3`##4`##5`##6`##7]{%
 \def\Labela{##1}\def\Labelb{##2}\def\Labelc{##3}\def\Labeld{##4}%
 \def\Labele{##5}\def\Labelf{##6}\def\Labelg{##7}}%
\next[#7]%
\dohSquares/#4/}%
\def\dohSquares/#1`#2`#3`#4`#5`#6`#7/{%
\Squarepppp(\Xpos,\Ypos)|\Xa\Xc\Xd\Xf|/#1`#3`#4`#6/<\deltaY>%
 [\Nodea`\Nodeb`\Noded`\Nodee;\Labela`\Labelc`\Labeld`\Labelf]%
 \advance \Xpos by \topw%
\Squarepppp(\Xpos,\Ypos)|\Xb\Xd\Xe\Xg|/#2``#5`#7/<\deltaY>%
[\Nodeb`\Nodec`\Nodee`\Nodef;\Labelb``\Labele`\Labelg]%
\ignorespaces}%
\def\hSquaresp(#1){\ifnextchar|{\hSquarespp(#1)}{\hSquarespp%
(#1)|aalmrbb|}}%
\def\hSquarespp(#1)|#2|{\ifnextchar/{\hSquaresppp(#1)|#2|}%
    {\hSquaresppp(#1)|#2|/>`>`>`>`>`>`>/}}%
\def\hSquaresppp(#1)|#2|/#3/{%
    \ifnextchar<{\hSquarespppp(#1)|#2|/#3/}%
    {\hSquarespppp(#1)|#2|/#3/<\default>}}%
\def\vSquarespppp(#1,#2)|#3|/#4/<#5,#6>[#7;#8]{%
\Xpos=#1\Ypos=#2%
\def\next|##1##2##3##4##5##6##7|{%
 \def\Xa{##1}\def\Xb{##2}\def\Xc{##3}\def\Xd{##4}%
 \def\Xe{##5}\def\Xf{##6}\def\Xg{##7}}%
\next|#3|%
\deltaX=#5%
\deltaY=#6%
\def\next[##1`##2`##3`##4`##5`##6]{%
 \def\Nodea{##1}\def\Nodeb{##2}\def\Nodec{##3}%
 \def\Noded{##4}\def\Nodee{##5}\def\Nodef{##6}}%
\next[#7]%
\def\next[##1`##2`##3`##4`##5`##6`##7]{%
 \def\Labela{##1}\def\Labelb{##2}\def\Labelc{##3}\def\Labeld{##4}%
 \def\Labele{##5}\def\Labelf{##6}\def\Labelg{##7}}%
\next[#8]%
\dovSquares/#4/\ignorespaces}%
\def\dovSquares/#1`#2`#3`#4`#5`#6`#7/{%
\setmorphismwidth{\Nodea}{\Nodeb}{\Labela}{\topw}%
\setmorphismwidth{\Nodec}{\Noded}{\Labeld}{\botw}%
\ratchet{\topw}{\botw}%
\setmorphismwidth{\Nodee}{\Nodef}{\Labelg}{\botw}%
\ratchet{\topw}{\botw}%
\square(\Xpos,\Ypos)|\Xd\Xe\Xf\Xg|/`#5`#6`#7/<\topw,\deltaX>%
 [\Nodec`\Noded`\Nodee`\Nodef;`\Labele`\Labelf`\Labelg]%
\advance \Ypos by \deltaX%
\square(\Xpos,\Ypos)|\Xa\Xb\Xc\Xd|/#1`#2`#3`#4/<\topw,\deltaY>%
 [\Nodea`\Nodeb`\Nodec`\Noded;\Labela`\Labelb`\Labelc`\Labeld]%
}%
\def\vSquaresp(#1){\ifnextchar|{\vSquarespp(#1)}{\vSquarespp%
(#1)|alrmlrb|}}%
\def\vSquarespp(#1)|#2|{\ifnextchar/{\vSquaresppp(#1)|#2|}%
    {\vSquaresppp(#1)|#2|/>`>`>`>`>`>`>/}}%
\def\vSquaresppp(#1)|#2|/#3/{%
    \ifnextchar<{\vSquarespppp(#1)|#2|/#3/}%
    {\vSquarespppp(#1)|#2|/#3/<\default,\default>}}%
\def\osquarepppp(#1)|#2|/#3`#4`#5`#6/<#7>[#8]{\squarepppp%
 (#1)|#2|/#3`#4`#5`#6/<#7>[#8]%
 \let\Nodea\nodea\let\Nodeb\nodeb%
\let\Nodec\nodec\let\Noded\noded\Xpos=\xpos\Ypos=\ypos%
\deltaX=\deltax \deltaY=\deltay \isquare}%
\def\osquarep(#1){\ifnextchar|{\osquarepp(#1)}{\osquarepp(#1)|alrb|}}%
\def\osquarepp(#1)|#2|{\ifnextchar/{\osquareppp(#1)|#2|}%
    {\osquareppp(#1)|#2|/>`>`>`>/}}%
\def\osquareppp(#1)|#2|/#3`#4`#5`#6/{%
    \ifnextchar<{\osquarepppp(#1)|#2|/#3`#4`#5`#6/}%
    {\osquarepppp(#1)|#2|/#3`#4`#5`#6/<1500,1500>}}%
\def\isquarepppp(#1)|#2|/#3`#4`#5`#6/<#7>[#8]{%
 \squarepppp(#1)|#2|/#3`#4`#5`#6/<#7>[#8]%
\ifnextchar|{\cubep}{\cubep|mmmm|}}%
\def\cubep|#1|{\ifnextchar/{\cubepp|#1|}{\cubepp|#1|/>`>`>`>/}}%
\def\isquare{\ifnextchar({\isquarep}{\isquarep(\default,\default)}}%
\def\isquarep(#1){\ifnextchar|{\isquarepp(#1)}{\isquarepp(#1)|alrb|}}%
\def\isquarepp(#1)|#2|{\ifnextchar/{\isquareppp(#1)|#2|}%
    {\isquareppp(#1)|#2|/>`>`>`>/}}%
\def\isquareppp(#1)|#2|/#3`#4`#5`#6/{%
    \ifnextchar<{\isquarepppp(#1)|#2|/#3`#4`#5`#6/}%
    {\isquarepppp(#1)|#2|/#3`#4`#5`#6/<500,500>}}%
\def\cubepp|#1#2#3#4|/#5`#6`#7`#8/[#9]{%
\def\next[##1`##2`##3`##4]{\gdef\Labela{##1}%
\gdef\Labelb{##2}\gdef\Labelc{##3}\gdef\Labeld{##4}}\next[#9]%
\xend\xpos \yend\ypos%
\Xend\xend\advance\Xend by -\Xpos%
\Yend\yend\advance\Yend by -\Ypos%
\domorphism(\Xpos,\Ypos)|#2|/#6/<\Xend,\Yend>[\Nodeb`\nodeb;\Labelb]%
\advance\Xpos by-\deltaX%
\advance\xend by-\deltax%
\Xend\xend\advance\Xend by -\Xpos%
\domorphism(\Xpos,\Ypos)|#1|/#5/<\Xend,\Yend>[\Nodea`\nodea;\Labela]%
\advance\Ypos by-\deltaY%
\advance\yend by-\deltay%
\Yend\yend\advance\Yend by -\Ypos%
\domorphism(\Xpos,\Ypos)|#3|/#7/<\Xend,\Yend>[\Nodec`\nodec;\Labelc]%
\advance\Xpos by\deltaX%
\advance\xend by\deltax%
\Xend\xend\advance\Xend by -\Xpos%
\domorphism(\Xpos,\Ypos)|#4|/#8/<\Xend,\Yend>[\Noded`\noded;\Labeld]%
\ignorespaces}%
\def\setwdth#1#2{\setbox0\hbox{$\labelstyle#1$}\wdth=\wd0%
\setbox0\hbox{$\labelstyle#2$}\ifnum\wdth<\wd0 \wdth=\wd0 \fi}%
\def\topppp/#1/<#2>^#3_#4{\:%
\ifnum#2=0%
   \setwdth{#3}{#4}\deltax=\wdth \divide \deltax by \ul%
   \advance \deltax by \defaultmargin  \ratchet{\deltax}{200}%
\else \deltax #2%
\fi%
\xy\ar@{#1}^{#3}_{#4}(\deltax,0) \endxy%
\:}%
\def\toppp/#1/<#2>^#3{\ifnextchar_{\topppp/#1/<#2>^{#3}}{\topppp/#1/<#2>^{#3}_{}}}%
\def\topp/#1/<#2>{\ifnextchar^{\toppp/#1/<#2>}{\toppp/#1/<#2>^{}}}%
\def\toop/#1/{\ifnextchar<{\topp/#1/}{\topp/#1/<0>}}%
\def\to{\ifnextchar/{\toop}{\toop/>/}}%
\def\twopppp/#1`#2/<#3>^#4_#5{\:%
\ifnum0=#3%
  \setwdth{#4}{#5}\deltax=\wdth \divide \deltax by \ul \advance \deltax%
  by \defaultmargin \ratchet{\deltax}{200}%
\else \deltax#3 \fi%
\xy\ar@{#1}@<2.5pt>^{#4}(\deltax,0)%
\ar@{#2}@<-2.5pt>_{#5}(\deltax,0)\endxy\:}%
\def\twoppp/#1`#2/<#3>^#4{\ifnextchar_{\twopppp/#1`#2/<#3>^{#4}}%
  {\twopppp/#1`#2/<#3>^{#4}_{}}}%
\def\twopp/#1`#2/<#3>{\ifnextchar^{\twoppp/#1`#2/<#3>}{\twoppp/#1`#2/<#3>^{}}}%
\def\twop/#1`#2/{\ifnextchar<{\twopp/#1`#2/}{\twopp/#1`#2/<0>}}%
\def\threeppppp/#1`#2`#3/<#4>^#5|#6_#7{\:%
\ifnum0=#4%
\setbox0\hbox{$\labelstyle#5$}\wdth=\wd0%
\setbox0\hbox{$\labelstyle#6$}\ifnum\wdth<\wd0 \wdth=\wd0 \fi%
\setbox0\hbox{$\labelstyle#7$}\ifnum\wdth<\wd0 \wdth=\wd0 \fi%
\deltax=\wdth \divide \deltax by \ul \advance \deltax by%
\defaultmargin \ratchet{\deltax}{300}%
\else\deltax#4 \fi%
    \xy \ifnum\wd0=0 \ar@{#2}(\deltax,0)%
    \else \ar@{#2}|{#6}(\deltax,0)\fi%
\ar@{#1}@<4.5pt>^{#5}(\deltax,0)%
\ar@{#3}@<-4.5pt>_{#7}(\deltax,0)\endxy\:}%
\def\threepppp/#1`#2`#3/<#4>^#5|#6{\ifnextchar_{\threeppppp%
  /#1`#2`#3/<#4>^{#5}|{#6}}{\threeppppp/#1`#2`#3/<#4>^{#5}|{#6}_{}}}%
\def\threeppp/#1`#2`#3/<#4>^#5{\ifnextchar|{\threepppp%
  /#1`#2`#3/<#4>^{#5}}{\threepppp/#1`#2`#3/<#4>^{#5}|{}}}%
\def\threepp/#1`#2`#3/<#4>{\ifnextchar^{\threeppp/#1`#2`#3/<#4>}%
  {\threeppp/#1`#2`#3/<#4>^{}}}%
\def\threep/#1`#2`#3/{\ifnextchar<{\threepp/#1`#2`#3/}%
  {\threepp/#1`#2`#3/<0>}}%
\def\twoar(#1,#2){{%
 \scalefactor{0.1}%
 \deltax#1\deltay#2%
 \deltaX=\ifnum\deltax<0-\fi\deltax%
 \deltaY=\ifnum\deltay<0-\fi\deltay%
 \Xend\deltax \multiply \Xend by \deltax%
 \Yend\deltay \multiply \Yend by \deltay%
 \advance\Xend by \Yend \multiply \Xend by 3%
 \ifnum \deltaX > \deltaY%
    \multiply \deltaX by 3 \advance \deltaX by \deltaY%
 \else%
    \multiply \deltaY by 3 \advance \deltaX by \deltaY%
 \fi%
 \multiply\deltax by 500%
 \multiply\deltay by 500%
 \xpos\deltax \multiply \xpos by 3 \divide\xpos by \deltaX%
 \Xpos\deltax \multiply \Xpos by \deltaX \divide \Xpos by \Xend%
 \advance \xpos by \Xpos%
 \ypos\deltay \multiply \ypos by 3 \divide\ypos by \deltaX%
 \Ypos\deltay \multiply \Ypos by \deltaX \divide \Ypos by \Xend%
 \advance \ypos by \Ypos%
 \xy \ar@{=>}(\xpos,\ypos) \endxy%
}\ignorespaces}%
\def\iiixiiipppppp(#1,#2)|#3|/#4/<#5>#6<#7>[#8;#9]{%
 \xpos#1\ypos#2\relax%
 \def\next|##1##2##3##4##5##6##7|{\def\xa{##1}\def\xb{##2}%
 \def\xc{##3}\def\xd{##4}\def\xe{##5}\def\xf{##6}\nextt|##7|}%
 \def\nextt|##1##2##3##4##5##6|{\def\xg{##1}\def\xh{##2}%
 \def\xi{##3}\def\xj{##4}\def\xk{##5}\def\xl{##6}}%
 \next|#3|%
 \def\next<##1,##2>{\deltax##1\deltay##2}%
 \next<#5>%
 \def\next<##1,##2>{\deltaX##1\deltaY##2}%
 \next<#7>%
 \def\next##1{\topw##1\relax%
 \ifodd\topw \def\zl{}\else\def\zl{\relax}\fi \divide\topw by 2%
 \ifodd\topw \def\zk{}\else\def\zk{\relax}\fi \divide\topw by 2%
 \ifodd\topw \def\zj{}\else\def\zj{\relax}\fi \divide\topw by 2%
 \ifodd\topw \def\zi{}\else\def\zi{\relax}\fi \divide\topw by 2%
 \ifodd\topw \def\zh{}\else\def\zh{\relax}\fi \divide\topw by 2%
 \ifodd\topw \def\zg{}\else\def\zg{\relax}\fi \divide\topw by 2%
 \ifodd\topw \def\zf{}\else\def\zf{\relax}\fi \divide\topw by 2%
 \ifodd\topw \def\ze{}\else\def\ze{\relax}\fi \divide\topw by 2%
 \ifodd\topw \def\zd{}\else\def\zd{\relax}\fi \divide\topw by 2%
 \ifodd\topw \def\zc{}\else\def\zc{\relax}\fi \divide\topw by 2%
 \ifodd\topw \def\zb{}\else\def\zb{\relax}\fi \divide\topw by 2%
 \ifodd\topw \def\za{}\else\def\za{\relax}\fi}%
 \next{#6}%
 \def\next[##1`##2`##3`##4`##5`##6`##7`##8`##9]{%
 \def\nodea{##1}\def\nodeb{##2}\def\nodec{##3}%
 \def\noded{##4}\def\nodee{##5}\def\nodef{##6}%
 \def\nodeg{##7}\def\nodeh{##8}\def\nodei{##9}}%
 \next[#8]%
 \def\next[##1`##2`##3`##4`##5`##6`##7]{%
 \def\labela{##1}\def\labelb{##2}\def\labelc{##3}%
 \def\labeld{##4}\def\labele{##5}\def\labelf{##6}\nextt[##7]}%
 \def\nextt[##1`##2`##3`##4`##5`##6]{%
 \def\labelg{##1}\def\labelh{##2}\def\labeli{##3}%
 \def\labelj{##4}\def\labelk{##5}\def\labell{##6}}%
 \next[#9]%
 \def\next/##1`##2`##3`##4`##5`##6`##7/{%
\morphism(\xpos,\ypos)|\xe|/{##5}/<\deltax,0>[\nodeg`\nodeh;\labele]%
 \ifx\zi\empty\relax \morphism(\xpos,\ypos)||/<-/<-\deltaX,0>[\nodeg`0;]\fi%
 \ifx\zd\empty\relax \morphism(\xpos,\ypos)||<0,-\deltaY>[\nodeg`0;]\fi%
 \advance\xpos by \deltax%
 \morphism(\xpos,\ypos)|\xf|/{##6}/<\deltax,0>[\nodeh`\nodei;\labelf]%
 \ifx\ze\empty\relax \morphism(\xpos,\ypos)||<0,-\deltaY>[\nodeh`0;]\fi%
 \advance\xpos by \deltax%
 \ifx\zf\empty\relax \morphism(\xpos,\ypos)||<0,-\deltaY>[\nodei`0;]\fi%
 \ifx\zl\empty\relax \morphism(\xpos,\ypos)||<\deltaX,0>[\nodei`0;]\fi%
 \advance\ypos by \deltay%
 \ifx\zk\empty\relax \morphism(\xpos,\ypos)||<\deltaX,0>[\nodef`0;]\fi%
 \advance\xpos by -\deltax%
 \morphism(\xpos,\ypos)|\xd|/{##4}/<\deltax,0>[\nodee`\nodef;\labeld]%
 \advance\xpos by -\deltax%
 \morphism(\xpos,\ypos)|\xc|/{##3}/<\deltax,0>[\noded`\nodee;\labelc]%
 \ifx\zh\empty\relax \morphism(\xpos,\ypos)||/<-/<-\deltaX,0>[\noded`0;]\fi%
 \advance\ypos by \deltay%
 \morphism(\xpos,\ypos)|\xa|/{##1}/<\deltax,0>[\nodea`\nodeb;\labela]%
 \ifx\zg\empty\relax \morphism(\xpos,\ypos)||/<-/<-\deltaX,0>[\nodea`0;]\fi%
 \ifx\za\empty\relax \morphism(\xpos,\ypos)||/<-/<0,\deltaY>[\nodea`0;]\fi%
 \advance\xpos by \deltax%
 \morphism(\xpos,\ypos)|\xb|/{##2}/<\deltax,0>[\nodeb`\nodec;\labelb]%
 \ifx\zb\empty\relax \morphism(\xpos,\ypos)||/<-/<0,\deltaY>[\nodeb`0;]\fi%
 \advance\xpos by \deltax%
 \ifx\zc\empty\relax \morphism(\xpos,\ypos)||/<-/<0,\deltaY>[\nodec`0;]\fi%
 \ifx\zj\empty\relax \morphism(\xpos,\ypos)||<\deltaX,0>[\nodec`0;]\fi%
 \nextt/##7/}%
 \def\nextt/##1`##2`##3`##4`##5`##6/{%
 \morphism(\xpos,\ypos)|\xi|/{##3}/<0,-\deltay>[\nodec`\nodef;\labeli]%
 \advance\xpos by -\deltax%
 \morphism(\xpos,\ypos)|\xh|/{##2}/<0,-\deltay>[\nodeb`\nodee;\labelh]%
 \advance\xpos by -\deltax%
 \morphism(\xpos,\ypos)|\xg|/{##1}/<0,-\deltay>[\nodea`\noded;\labelg]%
 \advance\ypos by -\deltay%
 \morphism(\xpos,\ypos)|\xj|/{##4}/<0,-\deltay>[\noded`\nodeg;\labelj]%
 \advance\xpos by \deltax%
 \morphism(\xpos,\ypos)|\xk|/{##5}/<0,-\deltay>[\nodee`\nodeh;\labelk]%
 \advance\xpos by \deltax%
 \morphism(\xpos,\ypos)|\xl|/{##6}/<0,-\deltay>[\nodef`\nodei;\labell]}%
 \next/#4/\ignorespaces}%
\def\iiixiiip(#1){\ifnextchar|{\iiixiiipp(#1)}%
  {\iiixiiipp(#1)|aammbblmrlmr|}}%
\def\iiixiiipp(#1)|#2|{\ifnextchar/{\iiixiiippp(#1)|#2|}%
    {\iiixiiippp(#1)|#2|/>`>`>`>`>`>`>`>`>`>`>`>/}}%
\def\iiixiiippp(#1)|#2|/#3/{%
    \ifnextchar<{\iiixiiipppp(#1)|#2|/#3/}%
    {\iiixiiipppp(#1)|#2|/#3/<\default,\default>}}%
\def\iiixiiipppp(#1)|#2|/#3/<#4>{\ifnextchar[{\iiixiiippppp(#1)|#2|/#3/%
   <#4>0<0,0>}{\iiixiiippppp(#1)|#2|/#3/<#4>}}%
\def\iiixiiippppp(#1)|#2|/#3/<#4>#5{\ifnextchar<%
   {\iiixiiipppppp(#1)|#2|/#3/<#4>{#5}}%
   {\iiixiiipppppp(#1)|#2|/#3/<#4>{#5}<400,400>}}%
\def\iiixiipppppp(#1,#2)|#3|/#4/<#5>#6<#7>[#8;#9]{%
 \xpos#1\ypos#2\relax%
 \def\next|##1##2##3##4##5##6##7|{\def\xa{##1}\def\xb{##2}%
 \def\xc{##3}\def\xd{##4}\def\xe{##5}\def\xf{##6}\def\xg{##7}}%
 \next|#3|%
 \def\next<##1,##2>{\deltax##1\deltay##2}%
 \next<#5>%
 \deltaX#7%
 \topw#6%
 \def\next{%
 \ifodd\topw \def\za{}\else\def\za{\relax}\fi \divide\topw by 2%
 \ifodd\topw \def\zb{}\else\def\zb{\relax}\fi \divide\topw by 2%
 \ifodd\topw \def\zc{}\else\def\zc{\relax}\fi \divide\topw by 2%
 \ifodd\topw \def\zd{}\else\def\zd{\relax}\fi}%
 \next%
 \def\next[##1`##2`##3`##4`##5`##6]{%
 \def\nodea{##1}\def\nodeb{##2}\def\nodec{##3}%
 \def\noded{##4}\def\nodee{##5}\def\nodef{##6}}%
 \next[#8]%
 \def\next[##1`##2`##3`##4`##5`##6`##7]{%
 \def\labela{##1}\def\labelb{##2}\def\labelc{##3}%
 \def\labeld{##4}\def\labele{##5}\def\labelf{##6}\def\labelg{##7}}%
 \next[#9]%
 \def\next/##1`##2`##3`##4`##5`##6`##7/{%
 \ifx\zc\empty\relax\morphism(\xpos,\ypos)<\deltaX,0>[0`\noded;]\fi%
 \advance\xpos by\deltaX%
 \morphism(\xpos,\ypos)|\xc|/##3/<\deltax,0>[\noded`\nodee;\labelc]%
 \advance\xpos by \deltax%
 \morphism(\xpos,\ypos)|\xd|/##4/<\deltax,0>[\nodee`\nodef;\labeld]%
 \advance\xpos by \deltax%
 \ifx\zd\empty\relax  \morphism(\xpos,\ypos)<\deltaX,0>[\nodef`0;]\fi%
 \advance\xpos by -\deltaX  \advance\xpos by -\deltax%
 \advance\xpos by -\deltax  \advance\ypos by \deltay%
 \ifx\za\empty\relax\morphism(\xpos,\ypos)<\deltaX,0>[0`\nodea;]\fi%
 \advance\xpos by\deltaX%
 \morphism(\xpos,\ypos)|\xa|/##1/<\deltax,0>[\nodea`\nodeb;\labela]%
 \morphism(\xpos,\ypos)|\xe|/##5/<0,-\deltay>[\nodea`\noded;\labele]%
 \advance\xpos by \deltax%
 \morphism(\xpos,\ypos)|\xb|/##2/<\deltax,0>[\nodeb`\nodec;\labelb]%
 \morphism(\xpos,\ypos)|\xf|/##6/<0,-\deltay>[\nodeb`\nodee;\labelf]%
 \advance\xpos by \deltax%
 \morphism(\xpos,\ypos)|\xg|/##7/<0,-\deltay>[\nodec`\nodef;\labelg]%
 \ifx\zb\empty\relax \morphism(\xpos,\ypos)<\deltaX,0>[\nodec`0;]\fi}%
 \next/#4/\ignorespaces}%
\def\iiixiip(#1){\ifnextchar|{\iiixiipp(#1)}%
  {\iiixiipp(#1)|aabblmr|}}%
\def\iiixiipp(#1)|#2|{\ifnextchar/{\iiixiippp(#1)|#2|}%
    {\iiixiippp(#1)|#2|/>`>`>`>`>`>`>/}}%
\def\iiixiippp(#1)|#2|/#3/{%
    \ifnextchar<{\iiixiipppp(#1)|#2|/#3/}%
    {\iiixiipppp(#1)|#2|/#3/<\default,\default>}}%
\def\iiixiipppp(#1)|#2|/#3/<#4>{\ifnextchar[{\iiixiippppp(#1)|#2|/#3/%
   <#4>{0}<0>}{\iiixiippppp(#1)|#2|/#3/<#4>}}%
\def\iiixiippppp(#1)|#2|/#3/<#4>#5{\ifnextchar<%
   {\iiixiipppppp(#1)|#2|/#3/<#4>{#5}}%
   {\iiixiipppppp(#1)|#2|/#3/<#4>{#5}<0>}}%
\def\node#1(#2,#3)[#4]{%
\expandafter\gdef\csname x@#1\endcsname{#2}%
\expandafter\gdef\csname y@#1\endcsname{#3}%
\expandafter\gdef\csname ob@#1\endcsname{#4}%
\ignorespaces}%
\def\arrowp|#1|{\ifnextchar/{\arrowpp|#1|}{\arrowpp|#1|/>/}}%
\def\arrowpp|#1|/#2/[#3`#4;#5]{%
\xfinish=\csname x@#4\endcsname%
\yfinish=\csname y@#4\endcsname%
\advance\xfinish by -\csname x@#3\endcsname%
\advance\yfinish by -\csname y@#3\endcsname%
\morphism(\csname x@#3\endcsname,\csname y@#3\endcsname)|#1|/#2/%
<\xfinish,\yfinish>[\csname ob@#3\endcsname`\csname ob@#4\endcsname;#5]%
}%
\def\Loop(#1,#2)#3(#4,#5){\POS(#1,#2)*+!!<0ex,\axis>{#3}\ar@(#4,#5)}%
\def\iloop#1(#2,#3){\xy\Loop(0,0)#1(#2,#3)\endxy}%
     \let \PATHafterPOS\PATHafterPOS@default%
     \let \arsavedPATHafterPOS@@\relax%
     \let\afterar@@\relax%
\xydef@\endxyobj{\if\inxy@\else\xyerror@{Unexpected \string\endxy}{}\fi%
>  \relax%
>   \dimen@=\Y@max \advance\dimen@-\Y@min%
>   \ifdim\dimen@<\z@ \dimen@=\z@ \Y@min=\z@ \Y@max=\z@ \fi%
>   \dimen@=\X@max \advance\dimen@-\X@min%
>   \ifdim\dimen@<\z@ \dimen@=\z@ \X@min=\z@ \X@max=\z@ \fi%
>   \edef\tmp@{\egroup%
>     \setboxz@h{\kern-\the\X@min \boxz@}%
>     \ht\z@=\the\Y@max \dp\z@=-\the\Y@min \wdz@=\the\dimen@%
>     \noexpand\maybeunraise@ \raise\dimen@\boxz@%
>     \noexpand\recoverXyStyle@ \egroup \noexpand\xy@end%
>     \U@c=\the\Y@max \advance\U@c-\the\Y@c%
>     \D@c=-\the\Y@min \advance\D@c\the\Y@c%
>     \L@c=-\the\X@min  \advance\L@c\the\X@c%
>     \R@c=\the\X@max  \advance\R@c-\the\X@c%
>    }\tmp@}%
\gdef\xymerge@MinMax{}%
\xydef@\twocell{\hbox\bgroup\xysave@MinMax\@twocell}%
\xydef@\uppertwocell{\hbox\bgroup\xysave@MinMax\@uppertwocell}%
\xydef@\lowertwocell{\hbox\bgroup\xysave@MinMax\@lowertwocell}%
\xydef@\compositemap{\hbox\bgroup\xysave@MinMax\@compositemap}%
\xydef@\xysave@MinMax{\xdef\xymerge@MinMax{%
   \noexpand\ifdim\X@max<\the\X@max \X@max=\the\X@max\noexpand\fi%
   \noexpand\ifdim\X@min>\the\X@min \X@min=\the\X@min\noexpand\fi%
   \noexpand\ifdim\Y@max<\the\Y@max \Y@max=\the\Y@max\noexpand\fi%
   \noexpand\ifdim\Y@min>\the\Y@min \Y@min=\the\Y@min\noexpand\fi%
  }}%
\xydef@\drop@Twocell{\boxz@ \xymerge@MinMax}%
\xydef@\twocell@DONE{%
  \edef\tmp@{\egroup%
   \X@min=\the\X@min \X@max=\the\X@max%
   \Y@min=\the\Y@min \Y@max=\the\Y@max}\tmp@%
  \L@c=\X@c \advance\L@c-\X@min \R@c=\X@max \advance\R@c-\X@c%
  \D@c=\Y@c \advance\D@c-\Y@min \U@c=\Y@max \advance\U@c-\Y@c%
  \ht\z@=\U@c \dp\z@=\D@c \dimen@=\L@c \advance\dimen@\R@c \wdz@=\dimen@%
  \computeLeftUpness@%
  \setboxz@h{\kern-\X@p \raise-\Y@c\boxz@ }%
  \dimen@=\L@c \advance\dimen@\R@c \wdz@=\dimen@ \ht\z@=\U@c \dp\z@=\D@c%
  \Edge@c={\rectangleEdge}\Invisible@false \Hidden@false%
  \edef\Drop@@{\noexpand\drop@Twocell%
   \noexpand\def\noexpand\Leftness@{\Leftness@}%
   \noexpand\def\noexpand\Upness@{\Upness@}}%
  \edef\Connect@@{\noexpand\connect@Twocell%
   \noexpand\ifdim\X@max<\the\X@max \X@max=\the\X@max\noexpand\fi%
   \noexpand\ifdim\X@min>\the\X@min \X@min=\the\X@min\noexpand\fi%
   \noexpand\ifdim\Y@max<\the\Y@max \Y@max=\the\Y@max\noexpand\fi%
   \noexpand\ifdim\Y@min>\the\Y@min \Y@min=\the\Y@min\noexpand\fi }%
  \xymerge@MinMax%
}%
\let\to\rightarrow
\let\barrsquare\square
\let\square\undefined
\title{Modular invariants detecting the cohomology of\\$BF_4$ at the prime $3$}
\author{Carles Broto}
\address{Departament de Matem\`atiques\\
Universitat Aut\`onoma de Barcelona\\\newline
08193 Bellaterra\\
Spain}
\email{broto@mat.uab.es}
\urladdr{}
\def\cnewtheorem#1[#2]#3{\newtheorem{#1}{#3}[section]
\expandafter\let\csname c@#1\endcsname\c@thm}
\let\xysavmatrix\xymatrix
\def\xymatrix{\disablesubscriptcorrection\xysavmatrix}
\newtheorem{thm}{Theorem}[section]
\theoremstyle{definition}
\theoremstyle{remark}
\numberwithin{equation}{section}
\newcommand{\To}{\longrightarrow}
\newcommand{\conj}[2]{\bigl\{\,#1\,\big|\,#2\,\bigr\}}
\newcommand{\Hom}{\operatorname{Hom}\nolimits}
\newcommand{\im}{\operatorname{Im}\nolimits}
\newcommand{\coker}{\operatorname{Coker}\nolimits}
\newcommand{\res}{\operatorname{res}\nolimits}
\newcommand{\gcdd}{\operatorname{gcd\,}\nolimits}
\newcommand{\fp}{\mathbb F_p}
\newcommand{\ff}{\mathbb F}
\newcommand{\zp}{\mathbb Z/p}
\newcommand{\pp}{\mathcal P}
\newcommand{\kk}{\mathcal K}
\newcommand{\uu}{\mathcal U}
\newcommand{\nn}{\mathcal N}
\newcommand{\cc}{\mathcal C}
\newcommand{\F}{{\mathbb F}}
\newcommand{\pv}{P(V^*)}
\newcommand{\ev}{E(dV^*)}
\newcommand{\kv}{K(V^*)}
\newcommand{\fpv}{FP(V^*)}
\newcommand{\fkv}{FK(V^*)}
\newcommand{\glv}{GL(V)}
\newcommand{\gll}{GL}
\newcommand{\fpvg}{FP(V^*)^G}
\newcommand{\pvg}{P(V^*)^G}
\newcommand{\kvg}{K(V^*)^G}
\newcommand{\rhoun}{\rho_1,\ldots, \rho_n}
\newcommand{\drhoun}{d\rho_1,\ldots, d\rho_n}
\newcommand{\drho}{d\rho_1\cdots d\rho_n}
\newcommand{\prho}{P [\rho_1, \ldots, \rho_n]}
\newcommand{\pu}{P [u_1, u_2, u_3]}
\newcommand{\evv}{E [v_1, v_2, v_3]}
\newcommand{\edx}{E[dx_1, \ldots, dx_n]}
\newcommand{\edrho}{E[d\rho_1, \ldots, d\rho_n]}
\newcommand{\pxun}{P[x_1, \ldots, x_n]}
\begin{document}

\begin{asciiabstract}
Attributed to J F Adams is the conjecture that, at odd primes, 
the mod-p cohomology ring of the classifying space of a connected compact 
Lie group is detected by its elementary abelian p-subgroups.
In this note we rely on Toda's calculation of H^*(BF_4;F_3)
in order to show that the conjecture holds in case of the exceptional
Lie group F_4.  To this aim we use invariant theory in order to identify
parts of H^*(BF_4;F_3) with invariant subrings in the cohomology of
elementary abelian 3-subgroups of F_4.  These subgroups themselves
are identified via the Steenrod algebra action on H^*(BF_4;F_3).
\end{asciiabstract}

\begin{htmlabstract}
Attributed to J&nbsp;F&nbsp;Adams is the conjecture that, at odd
primes, the mod&ndash;p cohomology ring of the classifying space of
a connected compact Lie group is detected by its elementary abelian
p&ndash;subgroups.  In this note we rely on Toda's calculation of
H<sup>*</sup>(BF<sub>4</sub>;<b>F</b><sub>3</sub>) in order to show that
the conjecture holds in case of the exceptional Lie group F<sub>4</sub>.
To this aim we use invariant theory in order to identify parts of
H<sup>*</sup>(BF<sub>4</sub>;<b>F</b><sub>3</sub>) with invariant
subrings in the cohomology of elementary abelian 3&ndash;subgroups of
F<sub>4</sub>.  These subgroups themselves are identified via the Steenrod
algebra action on H<sup>*</sup>(BF<sub>4</sub>;<b>F</b><sub>3</sub>).
\end{htmlabstract}

\begin{abstract}
Attributed to J\,F~Adams is the conjecture that, at odd primes, 
the mod--$p$ cohomology ring of the classifying space of a connected compact 
Lie group is detected by its elementary abelian $p$--subgroups.
In this note we rely on Toda's calculation of $H^*(BF_4;\mathbb{F}_3)$
in order to show that the conjecture holds in case of the exceptional
Lie group $F_4$.  To this aim we use invariant theory in order to identify
parts of $H^*(BF_4;\mathbb{F}_3)$ with invariant subrings in the cohomology of
elementary abelian $3$--subgroups of $F_4$.  These subgroups themselves
are identified via the Steenrod algebra action on $H^*(BF_4;\mathbb{F}_3)$.
\end{abstract}

\maketitle

\section{Introduction}

It has been known since the work of Borel \cite{Borel1,Borel2} that the
rational cohomology of the classifying space of a compact and connected
Lie group $G$ is detected on its maximal torus $T_G$, and can actually
be identified with the subalgebra of elements in the cohomology of
the classifying space of $T_G$ that are fixed by the natural action
of the Weyl group $W_G$; that is, we can identify $H^*(BG;\Q)\cong
H^*(BT_G;\Q)^{W_G}$. Similar identifications hold for cohomology
with coefficients in fields of prime charateristic as soon as this
charateristic does not divide the order of the Weyl group.

A quick look at the mod $p$ cohomology of classifying spaces of compact
connected Lie groups at torsion primes (cf Mimura and Toda \cite{MimTod})
shows that restrictions to  maximal tori usually have big kernels.
In particular all odd degree elements can only be mapped trivially by
the restriction to the maximal torus. We are then led to consider the
restriction to elementary abelian subgroups. At odd primes, there
is always a maximal one that consists of all elements of $p$--power
order in the maximal torus, but in presence of torsion there are also
elementary abelian subgroups which are non-toral; that is, not conjugate
to a subgroup of the maximal torus. If $\mathcal{E}_p(G)$ is a set of
representatives of all conjugacy classes of maximal elementary abelian
$p$--subgroups, then the kernel of the restriction map
$$q_G\colon H^*(BG;\F_p)\longrightarrow
  \prod_{E\in\mathcal{E}_p(G)}H^*(BE;\F_p)$$
is nilpotent, according to Quillen \cite{Quillen}, for any compact Lie
group $G$ and any prime $p$.  Adams conjectured  that $q_G$ is actually
a monomorphism if $G$ is compact and connected and $p$ is an odd prime.

In this note we rely on Toda's calculation \cite{Toda} of $H^*(BF_4;\F_3)$
to show that $q_G$ is a monomorphism in this case. Kono and Yagita
\cite{KonoYagita}  proved that $q_G$ is a monomorphism for $G=PU(3)$
at the prime $3$. This has been recently generalized  by Vavpeti\v{c}
and Viruel \cite{VavpeticViruel} to $G=PU(p)$ at the prime $p$, for $p$
odd. Mimura, Sambe, Tezuka, and Toda \cite{MSTT} have also obtained that
the conjecture is true for $G=E_6$ at $p=3$.

If $W_G(E)$ denotes the group of automorphisms of the elementary
abelian subgroup $E$ of $G$ which are induced by conjugation
in $G$, the restriction map has image in the invariant subring
$H^*(BE;\F_p)^{W_G(E)}$. In section two we present the relevant
invariant theory in  order to have a description as algebras over
the Steenrod algebra of these invariant rings for the elementary
abelian $3$--subgroups of $F_4$ that are involved in our calculations.
These subgroups were  identified by Rector \cite[Section~7]{Rector}
by arguments based on work of Toda, and confirmed by Adams using geometric
arguments. Taking Rector's calculations as starting point and comparing
the Steenrod algebra action on $H^*(BF_4;\F_3)$ and on the invariant
subrings $H^*(BT_{F_4};\F_3)^{W_{F_4}}$ and $H^*(BE;\F_3)^{W_{F_4}(E)}$,
we obtain a precise description of $q_{F_4}$ at the prime $3$ in section
three, and,  in particular, that it is injective.

Part of the results presented here were announced in \cite{Bro1} but
details remained unpublished at that time. Now, during the Conference in
Algebraic Topology held in Hanoi in honor of Hu\`ynh Mui's 60th birthday,
in August, 2004, I saw a renewed interest in the subject by M~Kameko,
M~Mimura, and A~Viruel, among others, which prompted me to submit this
note to the conference proceedings.

The author is partially supported by FEDER/MEC grant MTM2004--06686.

\section{Modular invariants in $P(V)\otimes E(V)$}

Let $\F_p$ be the field of $p$ elements, for an odd prime $p$, and $V$ a
$\F_p$--vector space of dimension $n$. We denote by $\pv$ the
symmetric algebra on the dual vector space $V^*$. If $d\colon V^* \to
dV^*$ is an isomorphism of $\F_p$--vector spaces and $\ev$ is the
exterior algebra on $dV^*$, $d$ extends uniquely to a derivation of the  
 algebra
\begin{equation*}
\kv=\pv \otimes \ev=\pxun\otimes \edx.
\end{equation*}
Let $G$ be a finite subgroup of $GL(V)$, so that $G$ acts on
$\pv$ via the transpose representation. This action can be extended 
to a differential algebra action on $\kv$
in a unique way. We are interested in cases in which the fixed
subalgebra is again a polynomial algebra on $n$ generators
\begin{equation*}
\pvg=\prho\,.
\end{equation*}
In this case, according to a theorem of Serre, $G$ is a pseudoreflection
group, i.e., $G$ is generated by elements that fix a codimension one
subspace of $V$ (see Bourbaki \cite{Bourbaki}, Benson \cite{Benson} and
Smith \cite{LSmith}).  If the order of $G$ is not divisible by $p$, then
$\pvg$ is a polynomial algebra if and only if $G$ is a pseudoreflection
group and in this case the ring of invariants of $\kv$ is
$$\kvg = \prho \otimes \edrho,$$
(see Solomon \cite{Solomon}, Benson \cite{Benson} and Smith \cite{LSmith}).

In this section we will discuss the case in which $\pvg$ is a polynomial
algebra but $p$ divides the order of $G$.  First, we will show a necessary
and sufficient condition under which the above formula holds. In the
case that this condition is not satisfied we show how to construct
new invariants in $\kv$. In the cases that we have checked, these new
invariants provide a complete system of generators for $\kvg$. For
$G=GL(V)$ or $G=SL(V)$ they can be compared with the system obtained by
Mui \cite{Mui}.

The modules of relative invariants will play a fundamental role in our
discussion. Recall that for a linear character of $G$, $\chi\colon
G\to \F_p^*$, with $\chi(gh)=\chi(g)\chi(h)$, the $\pvg$--module of
$\chi$--relative invariants is defined
$$\pvg_\chi = \conj{q\in\pv}{g\cdot q = \chi(g)q, \ \forall g\in G}\,.$$
If $G$ is a pseudoreflection group, it turns out that this is a free
$\pvg$--module of rank one, $\pvg_\chi= f_\chi\pvg$, for an element
$f_\chi\in\pv$ that can be written in a unique way, up to scalar
multiplication, as a product of forms in $V^*$ (see Stanley \cite{Stanley}
and Broto--Smith--Stong \cite{BSS}).

If we write $d\rho_i =\sum_{i=1}^n a_{ij}dx_j$, the jacobian,
$J=\det (a_{ij})_{i,j}$ is a non-trivial element of $\pv$ (see
Wilkerson \cite{Wilkerson}).
Moreover, it is a $\det^{-1}$--relative invariant:
$$J\in \pvg_{\det^{-1}}=f_{\det^{-1}}\cdot \pvg.$$
It follows that $f_{\det^{-1}}$ always divides $J$.

\begin{thm}[Broto \cite{Bro2}]
Let $\F_p$ be the field of $p$ elements, where $p$ is an odd prime, and $V$ a
$\F_p$--vector space of dimension $n$. Assume that $G$ is a finite
subgroup of $\glv$ such that $\pvg=\prho$, then
\begin{equation*}
\kvg =\prho\otimes \edrho
\end{equation*}
if and only if $J=f_{\det^{-1}}$ (up to an invertible of $\F_p$).
\end{thm}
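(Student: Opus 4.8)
The inclusion $\prho\otimes\edrho\subseteq\kvg$ holds unconditionally, since the $\rho_i$ are $G$--invariant and $d$ is $G$--equivariant, so the $d\rho_i$ are invariant; both sides are graded $\pvg$--modules, bigraded by polynomial and exterior degree, and $\prho\otimes\edrho$ is $\pvg$--free. I would prove the two implications separately. For ``only if'', assume $\kvg=\prho\otimes\edrho$ and pass to exterior degree $n$: by the definition of $J$ one has $(\prho\otimes\edrho)^{(n)}=J\cdot\pvg\cdot\omega_0$ with $\omega_0=dx_1\wedge\dots\wedge dx_n$, while $\Lambda^n(dV^*)$ is one--dimensional with $\omega_0$ affording the character $\det$ (so that $J\,\omega_0$ is invariant, matching $J\in\pvg_{\det^{-1}}$), whence $(\pv\otimes\Lambda^n(dV^*))^G=\pvg_{\det^{-1}}\cdot\omega_0=f_{\det^{-1}}\cdot\pvg\cdot\omega_0$ by the free rank--one description of the $\det^{-1}$--relative invariants. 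Comparing these two submodules of $\pv\cdot\omega_0$ forces $J\,\pvg=f_{\det^{-1}}\,\pvg$, i.e.\ $J=f_{\det^{-1}}$ up to an invertible of $\F_p$.

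\textbf{``If'', the scaffolding.} Now assume $J\doteq f_{\det^{-1}}$. Being polynomial, $\pvg$ is a regular UFD, and $\operatorname{Frac}(\pvg)=\F_p(V)^G$ (any invariant fraction $f/g$ equals $(f\prod_{h\neq 1}h(g))/\prod_h h(g)$, a ratio of invariants), so $\pv$ is a Cohen--Macaulay finite module over the regular ring $\pvg$, hence $\pvg$--free of rank $[\F_p(V):\operatorname{Frac}(\pvg)]=|G|$; base changing $\kvg=(\kv)^G$ to $\operatorname{Frac}(\pvg)$ and applying Galois descent for $\F_p(V)/\F_p(V)^G$ then shows $\kvg$ has $\pvg$--rank $2^n$, the same as $\prho\otimes\edrho$. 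Each summand $(\pv\otimes\Lambda^k(dV^*))^G$ of $\kvg$ is a saturated submodule of a free $\pvg$--module, hence reflexive over the UFD $\pvg$, so $\kvg$ is reflexive. Two nested reflexive $\pvg$--modules of equal rank coincide once they agree at every height--one prime $\mathfrak p$; and when $\mathfrak p$ lies outside the branch locus of $\pvg\subseteq\pv$ (i.e.\ $J$ is a unit in $\pv\otimes_{\pvg}\pvg_{\mathfrak p}$), the $d\rho_i$ form a module basis of $\Omega^1$ locally, so the inclusion becomes an isomorphism after faithfully flat base change along the finite étale extension $\pv\otimes_{\pvg}\pvg_{\mathfrak p}$, and hence was already an isomorphism there. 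Thus only the ramified height--one primes remain.

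\textbf{``If'', the branch primes (the heart).} A ramified height--one prime $\mathfrak p$ sits under $(\ell_H)\subseteq\pv$ for a reflecting hyperplane $H$, with inertia group the pointwise stabilizer $G_H$. The $\ell_H$--adic valuation of $J$ is the different exponent of $(\ell_H)$ over $\mathfrak p$, and that of $f_{\det^{-1}}$ is the exponent of $\ell_H$ in the $\det^{-1}$--relative invariant; comparing them shows they agree exactly when $G_H$ contains no transvection --- both equal $|G_H|-1$ in the tame case, whereas a transvection raises the valuation of $J$ but contributes nothing to $f_{\det^{-1}}$. Hence $J\doteq f_{\det^{-1}}$ forces the ramification to be tame at every branch prime, so in particular $G_H$ is cyclic of order $e=|G_H|$ prime to $p$. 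At such a $\mathfrak p$ one localizes, reduces to the inertia group $G_H$, and is left with the one--variable cyclic computation in the normal coordinate $\ell_H$: the invariant forms are freely generated over the local invariant ring by $d(\ell_H^{\,e})$ and the $G_H$--invariant tangential differentials, and since $J$ equals $\ell_H^{\,e-1}$ times a local unit --- exactly the Jacobian coming from local invariant coordinates --- the localized de Rham complex of $\pvg$ fills out all invariant forms. Therefore $(\prho\otimes\edrho)_{\mathfrak p}=(\kvg)_{\mathfrak p}$ for every height--one prime, and $\kvg=\prho\otimes\edrho$ follows.

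\textbf{Main obstacle.} The formal parts --- reflexivity, reduction to height--one primes, the unramified case --- are routine; the real work is the branch--prime analysis: converting the global identity $J\doteq f_{\det^{-1}}$ into local tameness, and then computing the module of invariant differential forms along a tame, possibly positive--dimensional, branch hyperplane. This is precisely where the hypothesis is used --- a transvection in $G$ fixing a hyperplane $x_i=0$ produces the invariant form $x_i\,dx_j-x_j\,dx_i\notin\prho\otimes\edrho$ while simultaneously making $v_{x_i}(J)>v_{x_i}(f_{\det^{-1}})$, so the ``too large $\kvg$'' phenomenon and the failure of the numerical identity have the same origin.
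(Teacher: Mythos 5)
Your ``only if'' half is fine and is in substance the paper's own argument (the paper phrases it contrapositively: writing $J=\iota f_{\det^{-1}}$ with $\deg\iota>0$, the element $\drho/\iota=f_{\det^{-1}}dx_1\cdots dx_n$ is an invariant outside $\prho\otimes\edrho$; your comparison of top exterior degree submodules $J\cdot\pvg\cdot\omega_0$ versus $f_{\det^{-1}}\cdot\pvg\cdot\omega_0$ says the same thing). The problem is the ``if'' direction, which is where your route genuinely diverges and where there is a gap at the decisive step. The branch--prime analysis rests on the assertion that $J\doteq f_{\det^{-1}}$ forces tameness at every reflecting hyperplane, justified by identifying $v_{\ell_H}(J)$ with a different exponent and claiming a transvection pushes it strictly above $v_{\ell_H}(f_{\det^{-1}})$. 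As stated this does not hold up: the residue field extensions here are non-perfect (indeed typically inseparable in the modular case), so the classical bound ``different exponent $\ge |G_H|-1$, with equality iff tame'' is not available, and the paper's own \fullref{exemple1} shows the numerics fail --- for $G=GL_2(\F_p)$ one has $J\doteq L_2^{2p-3}$, so $v_{\ell_H}(J)=2p-3$, far below $|G_H|-1=p^2-p-1$. The statement you actually need, that a transvection in $G_H$ forces $v_{\ell_H}(J)>v_{\ell_H}(f_{\det^{-1}})$, is a real theorem in modular invariant theory (in the spirit of Broer's and Hartmann--Shepler's work on Jacobians and the different), not something that follows from the one-line valuation comparison you give; and your subsequent tame local step (reduction to the inertia group $G_H$ and freeness of the local invariant forms on $d(\ell_H^{\,e})$ plus tangential generators) is also only sketched. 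So the scaffolding (rank $2^n$ by Galois descent, reflexivity, comparison at height-one primes, the unramified case) is sound, but the heart of the argument is missing.

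It is worth noting how much lighter the paper's ``if'' argument is, and that it needs none of this ramification theory: given $w\in\kvg$, expand $w=\sum_I w_Id\rho_I$ with $w_I\in\fpv$ (the $d\rho_I$ are a basis of $\fkv$ since $J\neq0$), choose $I_0$ of minimal length with $w_{I_0}\neq0$, and multiply by the complementary product $d\rho_{I_0'}$; minimality kills all other terms, so $wd\rho_{I_0'}=\pm w_{I_0}J\,dx_1\cdots dx_n\in\kvg$, whence $w_{I_0}J\in\pvg_{\det^{-1}}=f_{\det^{-1}}\pvg=J\pvg$ and $w_{I_0}\in\pvg$; subtracting $w_{I_0}d\rho_{I_0}$ and iterating puts $w$ in $\prho\otimes\edrho$. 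If you want to keep your local-global framework, the honest comparison is that it trades this half-page computation for serious input on Jacobians, relative invariants and wild ramification at reflecting hyperplanes, and until the strict-inequality claim at hyperplanes with transvections is actually proved, your proof of the ``if'' direction is incomplete.
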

\begin{proof}
As in the characteristic zero case, the fact $J\neq 0$, implies
that the morphism
\begin{equation*}
\prho\otimes \edrho \to \kv
\end{equation*}
is always injective. We need to determine the cases in which all
elements of $\kvg$ belong to the image; in other words, are expressible
as algebraic combination of $\rhoun,\drhoun$.

If $I=(i_1, \ldots, i_k)$ is an ordered sequence of integers:
$0<i_1<\cdots < i_k \leq n$, we denote $d\rho_I=d\rho_{i_1} d\rho_{i_2}
\cdots d\rho_{i_k}$ or $dx_I=dx_{i_1}dx_{i_2}\cdots dx_{i_k}$. With
this notation, $\kv$ is a free $\pv$--module generated by $\{dx_I\}_I$,
and so, if $\fpv$ is the field of fractions of $\pv$ and
$$\fkv=\fpv\otimes_{\pv} \kv,$$
then $\fkv$ is a $\fpv$--vector space with base $\{dx_I\}_I$, and
$\{d\rho_I\}_I$ form a base, too.

Assume first that $J=f_{\det^{-1}}$ up to an scalar.  Choose an arbitrary
element $w\in \kvg$. It may be written  as a linear combination $w=\sum_I
w_Id\rho_I$, with $w_I \in \fpvg$. We will show that each $w_I$  lies
in $P(V)^G$ so that $w\in \prho \otimes \edrho$.

Choose a sequence $I_0$ of minimal
length such that $w_{I_0}\neq 0$ and  let $I'_0$ be the complementary
sequence, so that the expression
$$wd\rho_{I'_0} = w_{I_0}d\rho_{I_0}d\rho_{I'_0}
  = \pm w_{I_0}d\rho_1\cdots d\rho_n
  = \pm w_{I_0} Jdx_1 \cdots dx_n$$
is still an element of $\kvg$. But $dx_1 \cdots dx_n$ is invariant
relative to $det$, hence $w_{I_0}J \in\pvg_{\det^{-1}}
=f_{\det^{-1}}\pvg$, and so therefore $w_{I_0}\in \pvg$. We repeat the
process with $w-w_{I_0}d\rho_{I_0}$ and we obtain inductively that all
coefficients $w_I\in\pvg$.

Assume now that $J=\iota\, f_{\det^{-1}}$ for a positive degree polynomial
$\iota \in \pvg$, then
\begin{equation*}
w=\dfrac{\drho}{\iota}=f_{\det^{-1}}\, dx_1\cdots dx_n
\end{equation*}
is an element in $K(V)^G$ but it does not belong to the subalgebra $\prho
\otimes \edrho$.
\end{proof}

We have seen that whenever the jacobian $J$ is different from
$f_{\det^{-1}}$ in an essential way,
 we obtain a new invariant that does not belong to
$\prho \otimes \edrho$, by dividing $\drho$ by an invariant factor of
$P(V^*)$. A similar argument applies  to any $d\rho_I$; that is, we can
divide each $d\rho_I$ by its \emph{maximal invariant factor}, $B_I\in
\pvg$, in order to obtain a new invariant: $M_I=\frac{1}{B_I}d\rho_I
\in K(V)^G$.

More precisely, fix a sequence $I$, and write 
\begin{equation*}
d\rho_I=\sum_J a_J (I)dx_J, \quad a_J\in \pv
\end{equation*}
where all sequences $J$ have the same length as $I$. We define $A_I= \gcdd
(a_J(I))$ so that $d\rho_I=A_I\sum_J b_J(I)dx_J$ and the coefficients
$b_J(I)\in\pv$ have no common factor.

It turns out that $A_I$ is relative invariant to a certain linear character
$\chi_I$ of $G$. In fact, for any $g\in G$
$$\sum_J a_J(I)d x_J  = d\rho_I   
 = g(d\rho_I) 
 = g\Bigl(A_I\sum_J b_J (I)dx_J\Bigr)
 = g(A_I)\sum_Jb'_J(I)dx_J$$
hence $g(A_I)$ divides each $a_J(I)$ and so therefore, $g(A_I)=\chi_I
(g)A_I$ for some element $\chi_I (g)\in \F_p^*$. This defines the
character $\chi_I$, and shows that $A_I\in \pvg_{\chi_I}$.

Since $\pvg_{\chi_I}=f_{\chi_I}\pvg$ for certain class $f_{\chi_I} \in
\pvg_{\chi_I}$, we can define elements $B_I\in \pvg$ by the equation
$A_I=B_I\cdot f_{\chi_I}$, and then $M_I{=}f_{\chi_I}
\sum_Jb_J(I)dx_J\allowbreak\in \kvg$ gives the factorization
\begin{equation*}
d\rho_I=B_I\cdot M_I
\end{equation*}
with $M_I\in \kvg$ and $B_I\in \pvg$. Notice also that by construction
we obtain relations
$$M_IM_J = \begin{cases}
  q_{I,J}M_{I\cup J} & \text{for some } q_{I,J}\in\pvg,
    \text{ if } J\cap I=\emptyset, \\
  0 & \text{if } J\cap I\neq\emptyset.
  \end{cases}$$
It seems reasonable to ask whether or not we have obtained a complete
system of generators and relations for $\kvg$.
\begin{que}
Is $\kvg$ a free $\pvg$--module generated by $\{M_I\}_I$, for every
group $G\leq GL(V)$ for which $\pvg$ is a polynomial algebra?
\end{que}

We have a positive answer in the cases which are involved in the 
mod $3$ cohomology of $BF_4$.

\begin{example}\label{exemple1} 
Let $\fp$ be the field of $p$ elements, $p$ an odd prime, and assume
 $\kv=P[x_1,x_2]\otimes E[dx_1, dx_2]$, and
$G=\gll_2(\fp)$. The Dickson invariants are described, in terms of determinants of two by two matrices as
$$L_2=\left | \begin{matrix} x_1 & x_2 \\ x_1^p & x_2^p
\end{matrix}\right | \,, \qquad
Q_{2,1} = \left | \begin{matrix} x_1 & x_2 \\ x_1^p & x_2^p
\end{matrix} \right |^{-1}\cdot
\biggl | \begin{matrix} x_1 & x_2 \\ x_1^{p^2} & x_2^{p^2}
\end{matrix} \biggr |$$
and we have $P[x_1,x_2]^{\gll_2(\fp)}=P[L_2^{p-1}, Q_{2,1}]$ (see Dickson
\cite{Dickson}).
One then obtains
$$dL_2^{p-1}=-L_2^{p-2}(x_2^pdx_1-x_1^pdx_2)\quad\text{and}\quad
  dQ_{2,1}=-L_2^{p-2}(x_2dx_1-x_1dx_2).$$
Since
$L_2^{p-2}=f_{\det^{-1}}$, we have
$$M_1= dL_2^{p-1} \quad\text{and}\quad M_2=dQ_{2,1}$$
with $B_1=B_2=1$.
On the other hand, $dL_1^{p-1}dQ_{2,1}=-L_2^{2p-3}dx_1dx_2$, hence
$$M_{1,2}=-L_2^{p-2}dx_1dx_2$$
with $M_1M_2=L_2^{p-1}M_{1,2}$. According to Mui \cite{Mui}, we know  that
$$\bigl\{L_2^{p-1},Q_{2,1}, M_1, M_2, M_{1,2}\bigr\}$$
is a full system of generators for                     
$(P[x_1, x_2]\otimes E[dx_1, dx_2])^{\gll_2(\fp)}$.
\end{example}

\begin{example}\label{exemple2}
We describe the invariants of
$$H^*((\zp)^3; \fp)=\pu\otimes \evv,$$
$\deg v_i=1$, $u_i=\beta v_i$ by the action of $SL_3 (\fp)$, for an odd
prime $p$.

The Dickson invariants are the determinants
\begin{align*}
L_3& =\left | \begin{matrix} u_1 & u_2 & u_3\\ u_1^p & u_2^p &
u_3^p \\ u^{p^2}_1 & u^{p^2}_2 & u^{p^2}_3
\end{matrix}\right | \,, & \deg L_3&=2\dfrac{p^3-1}{p-1} \\
Q_{3,2}& =\dfrac{1}{L_3}\left | \begin{matrix} u_1 & u_2 & u_3\\
u_1^p & u_2^p & u_3^p \\ u^{p^3}_1 & u^{p^3}_2 & u^{p^3}_3
\end{matrix}\right | \,, & \deg Q_{3,2}&=2(p^3-p^2) \\
Q_{3,1}& =\dfrac{1}{L_3}\left | \begin{matrix} u_1 & u_2 & u_3\\
u_1^{p^2} & u_2^{p^2} & u_3^{p^2} \\ u^{p^3}_1 & u^{p^3}_2 & u^{p^3}_3
\end{matrix}\right | \,, & \deg Q_{3,1}&=2(p^3-p) \\
\end{align*}
The action of the Steenrod algebra on these elements is determined
by
\begin{align*}
\pp^1L_3&=0 & \pp^1 Q_{3,2}&=0 & \pp Q_{3,1}&=L_3^2\\*[2pt]
\pp^pL_3&=0 & \pp^pQ_{3,2}&=Q_{3,1} & \pp^p Q_{3,1}&=0 \\*[2pt]
\pp^{p^2} L_3&=Q_{3,2}L_3 & \pp^{p^2}Q_{3,2}&=-Q^2_{3,2} &
\pp^{p^2}&=-Q_{3,2}Q_{3,1}
\end{align*}
(see Dickson \cite{Dickson} and Wilkerson \cite{Wilk-Dickson}).
Since det is trivial on $SL_3 (\fp)$, $M_{1,2,3}=v_1v_2v_3$ is invariant. 
Steenrod operations can
be used to find new invariants: \setlength{\extrarowheight}{1.7pt}
\begin{align*}
M_{2,3}&=\beta M_{1,2,3}=u_1v_2v_3 - u_2v_1v_3 + u_3v_1v_2,\\
M_{1,3}&=-\pp^1 M_{2,3}=-u_1^pv_2v_3 - u_2^pv_1v_3 + u^p_3v_1v_2,\\
M_{1,2}&=\pp^{p} M_{1,3}=-u_1^{p^2}v_2v_3 + u_2^{p^2}v_1v_3 - u^{p^2}_3v_1v_2,\\
M_3&=\beta M_{1,3}= \biggl | \begin{matrix} u_2 & u_3 \\ u_2^p & u_3^p
\end{matrix}\biggr|v_1 - \left | \begin{matrix} u_1 & u_3 \\ u_1^p & u_3^p
\end{matrix}\right |v_2 + \left | \begin{matrix} u_1 & u_2 \\ u_1^p & u_2^p
\end{matrix}\right |v_3,\\
M_{2}&=\pp^p M_3=\biggl  | \begin{matrix} u_2 & u_3 \\ u_2^{p^2}
& u_3^{p^2}\end{matrix}\biggr  |v_1 - \biggl | \begin{matrix} u_1 &
u_3\\ u_1^{p^2} & u_3^{p^2}\end{matrix}\biggr  |v_2 + \biggl |
\begin{matrix} u_1 & u_2 \\ u_1^{p^2} & u_2^{p^2}
\end{matrix}\biggr  |v_3,\\
M_{1}&=\pp^1 M_{2}= \biggl  | \begin{matrix} u_2^p & u_3^p \\
u_2^{p^2} & u_3^{p^2}\end{matrix}\biggr  |v_1 -\biggl  |
\begin{matrix} u_1^p & u_3^p\\ u_1^{p^2} & u_3^{p^2}\end{matrix}\biggr 
|v_2 + \biggl  | \begin{matrix} u_1^p & u_2^p \\ u_1^{p^2} &
u_2^{p^2} \end{matrix}\biggr |v_3,
\end{align*}
and finally $\beta M_{1}=L_3$. One can check that these are precisely the 
set of invariants described above:
\begin{align*}
M_1&=dL_3, & M_2&=\frac1{L_3}\,dQ_{3,2}, & M_3&=\frac1{L_3}\,dQ_{3,1},\\
M_{1,2}&=\frac1{L_3{}^2}\,dL_3\,dQ_{3,2}, &
M_{1,3}&=\frac1{L_3{}^2}\,dL_3\,dQ_{3,1}, &
M_{2,3}&=\frac1{L_3{}^2}\,dQ_{3,2}\,dQ_{3,1},
\end{align*}
$$\text{and}\quad M_{1,2,3}=-\frac1{L_3{}^4}\,dL_3\,dQ_{3,2}\,dQ_{3,1}.$$
Again in this case, according to Mui \cite{Mui},  
$$\{L_3, Q_{3,1}, Q_{3,2}, M_1, M_2,
M_3, M_{1,2}, M_{1,3}, M_{2,3}, M_{1,2,3}\}$$ 
forms a full system of
generators for $H^*((\Z/p)^3; \fp)^{SL_3 (\fp)}$.
\end{example}

\section{On the cohomology of $BF_4$ at prime 3}
In this section we will show how starting with the computation of $H^*(BF_4;\ff_3)$
by Toda \cite{Toda}, one can obtain that this cohomology ring is detected on elementary 
abelian $3$--subgroups. The argument goes through the description of 
$H^*(BF_4;\ff_3)/\sqrt{0}$ by Rector \cite{Rector}.

For the convenience of the reader we present here Toda's description of 
the cohomology of the classifying space of the exceptional Lie
group $F_4$ at $p=3$. 
The Weyl group of $F_4$ contains the Weyl group of
$\Spin_9$, so that we have:
\begin{equation*}
H^*(BT;\ff_3)^{W_{F_4}}\subset H^*
(BT;\ff_3)^{W_{\Spin_9}}=P[p_1, p_2,p_3,p_4]\,,
\end{equation*}
where $p_i$ are Pontrjagin classes. Toda first computed the invariant ring
$$
H^*(BT;\ff_3)^{W_{F_4}}=P[p_1,\bar{p_2},
\bar{p_5},\bar{p_9},\bar{p_{12}}]/(r_{15})
$$ 
where
\begin{align*}
\bar p_2&=p_2 - p_1^2, \\
\bar p_5&=p_4p_1 + p_3\bar p_2,\\
\bar p_9&=p_3^3 - p_4p_3p_1^2 + p_3^2\bar p_2p_1 - p_4\bar p_2p_1^3,\\
\bar p_{12}&=p_4^3 + p_4^2\bar p_2^2 + p_4\bar p_2^4,\\
r_{15}&=\bar p_5^3 + \bar p_5^2\bar p_2^2p_1 - \bar p_{12}p_1^3
   - \bar p_9\bar p_2^3,
\end{align*}
and obtained elements  $x_4$, $x_8$, $x_{20}$, $x_{36}$, $x_{48}$ in
$H^*(BF_4;\ff_3)$ that restrict to $p_1$, $\bar p_2$,
$\bar p_5$, $\bar p_9$, and $\bar
p_{12}$, respectively in $H^*(BT;\ff_3)^{W(F_4)}$. 

\begin{thm}[Toda \cite{Toda}]\label{TodaThm}
$H^*(BF_4;\ff_3)$ is an algebra generated by
\begin{equation*}
\begin{array}{c}
x_4, \quad x_8, \quad  x_{20}, \quad  x_{36}, \quad  x_{48}, \\
x_9=\beta x_8 ,\quad 
 x_{21}=\beta x_{20}, \quad  x_{25}=\pp^1 x_{21}, \quad  x_{26}=\beta x_{25},
\end{array}
\end{equation*}
with the relations
\begin{align*}
x_9x_4&=x_9x_8=x_9^2=x_{21}x_4=x_{25}x_8 \\
&=x_{21}x_{20}=x_{21}^2 =x_{25}x_{20}=x_{25}^2=0, \\
x_{21}x_8&=x_{25}x_4 =-x_{20}x_9, \\
x_{26}x_4&=-x_{21}x_9,\\
x_{26}x_8&=x_{25}x_9,\\
x_{25}x_{21}&=x_{26}x_{20},\\
x_{20}^3&=x_{48}x_4^3 + x_{36}x_8^3-x_{20}^2x_8^2x_4.
\end{align*}
Furthermore, 
the Steenrod algebra action on $H^*(BF_4;\ff_3)$ is completely
determined by the following relations:
\begin{center}
\begin{tabular}{|>{\scriptsize}c|>{\scriptsize}c|>{\scriptsize}c|
>{\scriptsize}m{3cm}|>{\scriptsize}m{5.15cm}|} \hline
 & \multicolumn{1}{c|}{$\beta$} & \multicolumn{1}{c|}{$\pp^1$} &
 \multicolumn{1}{c|}{$\pp^3$} & \multicolumn{1}{c|}{$\pp^9$} \\
 \hline
 $x_4$ &  & $-x_8+x_4^2$ & & \\
 \hline
 $x_8$ & $x_9$ & $x_8x_4$ & \centering{$x_{20} -x_8^2x_4$} &  \\
 \hline
 $x_9$ & & & \centering{$x_{21}$} & \\
 \hline
  $x_{20}$ & $x_{21}$ & & \centering{$x_{20}\bigl(-x_8+x_4^2\bigr)$} &
  $\bigl(x_{48}+x_{20}^2x_8\bigr)\bigl(-x_8+x_4^2\bigr) \newline
  +x_{36}\bigl(x_{20}+x_8^2x_4\bigr)
  +x_{26}x_{21}x_9$ \\
 \hline
$x_{21}$ & & $x_{25}$ &  & $-x_{48}x_9+x_{36}x_{21}$\\
 \hline
 $x_{25}$ & $x_{26}$ & & &  $x_{36}x_{25}-x_{26}^2x_9$ \\
 \hline
 $x_{26}$ & & & & $x_{36}x_{26}$\\
 \hline
$x_{36}$ & & $-x_{20}^2$ & $x_{48}-x_{36}\bigl(x_8+x_4^2\bigr)x_4$
$+x_{20}^2 \bigl(x_8+x_4^2\bigr)$
 & $-x_{48}x_{20}x_4+x_{48}\bigl(x_8^2+x_4^4\bigr)x_4^2  -x_{36}^2$
   $+x_{36}x_{20}\bigl(x_8+x_4^2\bigr)x_4^2-x_{36}\bigl(x_8^2+x_4^4\bigr)^2x_4$
$+x_{20}^2x_8 \bigl(x_8^3+\bigl(x_8+x_4^2\bigl)^2x_4^2\bigr)$ \\
  \hline
  $x_{48}$ & & $x_{26}^2$ & \centering{$-x_{48}\bigl(x_8+x_4^2\bigr)x_4$} &
$-x_{48}x_{36}+x_{48}x_{20}\bigl(-x_8^2-x_8x_4^2 +x_4^4\bigr)$
   $-x_{48} \bigl(x_8^2+x_4^4\bigr)^2 x_4$ \\
   \hline
\end{tabular}
\end{center}
\end{thm}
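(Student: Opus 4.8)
This is the computation of Toda \cite{Toda}; here I only describe the route one would follow to reconstruct it. The argument combines three inputs. First, the mod~$3$ cohomology of $F_4$ itself is known as a Hopf algebra over the Steenrod algebra: as an algebra $H^*(F_4;\ff_3)=\ff_3[x_8]/(x_8^3)\otimes E[x_3,x_7,x_{11},x_{15}]$, with $x_7=\pp^1x_3$ and $x_8=\beta x_7$. Second, restriction to a maximal torus: as already recorded, the image of $H^*(BF_4;\ff_3)$ in $H^*(BT;\ff_3)^{W_{\Spin_9}}=P[p_1,p_2,p_3,p_4]$ is $P[p_1,\bar p_2,\bar p_5,\bar p_9,\bar p_{12}]/(r_{15})$. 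Third, the subgroup $\Spin_9\subset F_4$ of maximal rank, which gives a fibration
\[
F_4/\Spin_9\longrightarrow B\Spin_9\longrightarrow BF_4
\]
with fibre the Cayley projective plane, $H^*(F_4/\Spin_9;\ff_3)=\ff_3[u]/(u^3)$ with $u$ of degree $8$, and with $H^*(B\Spin_9;\ff_3)=H^*(BT;\ff_3)^{W_{\Spin_9}}=P[p_1,p_2,p_3,p_4]$ since $\Spin_9$ has no $3$--torsion.

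From the second ingredient I would take $x_4,x_8,x_{20},x_{36},x_{48}$ to be classes restricting to $p_1,\bar p_2,\bar p_5,\bar p_9,\bar p_{12}$; pulling back $r_{15}$ then forces the even relation
\[
x_{20}^3=x_{48}x_4^3+x_{36}x_8^3-x_{20}^2x_8^2x_4 .
\]
For the nilpotent classes of odd degree $x_9,x_{21},x_{25},x_{26}$ and the relations they satisfy I would run the Rothenberg--Steenrod spectral sequence $\operatorname{Cotor}_{H^*(F_4;\ff_3)}(\ff_3,\ff_3)\Rightarrow H^*(BF_4;\ff_3)$, using the Hopf algebra above to compute the $E_2$--term and the coproduct to locate its differentials, and using the Serre spectral sequence of the fibration above as an independent constraint: its transgression $d_9(u)=x_9$ produces $x_9$, and comparing with the polynomial abutment $H^*(B\Spin_9;\ff_3)$ degree by degree pins down $x_{21},x_{25},x_{26}$ and the annihilation relations $x_9x_4=x_9x_8=x_9^2=0$, $x_{21}x_8=x_{25}x_4=-x_{20}x_9$, $x_{26}x_4=-x_{21}x_9$, $x_{26}x_8=x_{25}x_9$, $x_{25}x_{21}=x_{26}x_{20}$.

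Finally the Steenrod action is obtained by naturality together with the Cartan formula, the Adem relations and instability. On $x_4,x_8,x_9$ the operations are inherited from $H^*(F_4;\ff_3)$ (so $\pp^1x_4=-x_8+x_4^2$, $\beta x_8=x_9$, and so on); on the remaining generators I would combine the compatibility of transgression with Steenrod operations in the spectral sequences above with the naturality of $\res\colon H^*(BF_4;\ff_3)\to H^*(BT;\ff_3)^{W_{F_4}}$ to fix coefficients, using the unstable vanishing $\pp^ix=0$ for $2i>|x|$ (which is exactly what confines the nontrivial operations on the generators to $\beta,\pp^1,\pp^3,\pp^9$) to complete the table. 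The main difficulty is that none of these spectral sequences degenerates for formal reasons: the higher differentials, and above all the exact coefficients in the degree~$60$ relation and in the long $\pp^9$--entries of the table, must be forced by careful degree bookkeeping and repeated use of the Adem relations, and this is where the substance of \cite{Toda} lies.
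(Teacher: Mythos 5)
This theorem is not proved in the paper at all: it is imported verbatim from Toda \cite{Toda}, and the paper's entire ``proof'' consists of the citation, since the point of the note is to use Toda's answer as input, not to rederive it. So there is no internal argument to measure your proposal against. Judged on its own terms, your outline does follow what is essentially Toda's actual route: the known Hopf algebra $H^*(F_4;\ff_3)=\ff_3[x_8]/(x_8^3)\otimes E[x_3,x_7,x_{11},x_{15}]$, the fibration $F_4/\Spin_9\to B\Spin_9\to BF_4$ with fibre the Cayley plane and torsion-free $H^*(B\Spin_9;\ff_3)=P[p_1,p_2,p_3,p_4]$, the restriction to $H^*(BT;\ff_3)^{W_{F_4}}$ to produce $x_4,x_8,x_{20},x_{36},x_{48}$ and the degree $60$ relation, and naturality, instability and the Adem relations to constrain the Steenrod action. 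All of these ingredients are correctly stated.

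However, as a proof it has a genuine gap, which you yourself flag: every step where the content of the theorem actually resides is asserted rather than carried out. Nothing in the proposal establishes that the listed classes \emph{generate}, that the ideal of relations among $x_9,x_{21},x_{25},x_{26}$ is exactly the one displayed (for example that $x_{21}x_8=x_{25}x_4=-x_{20}x_9$ with those signs, or that no further relations occur), or that the $\pp^3$ and $\pp^9$ entries of the table have the stated coefficients; saying these are ``forced by careful degree bookkeeping and repeated use of the Adem relations'' names the method but does not execute it, and it is precisely the execution --- running the spectral sequences to a complete $E_\infty$, resolving the multiplicative extensions, and pinning the long $\pp^9$ formulas --- that constitutes Toda's theorem. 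Relative to the paper, which deliberately treats the result as a black box, your summary is a fair account of where the proof would come from; relative to the task of proving the statement, it is a plan, not a proof.
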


The important observation of Rector concerning the cohomology of $BF_4$
at the prime $3$, is that the quotient of $H^*(BF_4;\ff_3)$ by its radical
$\sqrt0$, the ideal of all nilpotent elements, can be better understood
that $H^*(BF_4;\ff_3)$ itself and carries most of its information.

Recall that the radical of an algebra $K$, is defined as
\begin{equation*}
\sqrt{0}=\conj{x\in K}{x^r=0 \text{ for some integer } r}.
\end{equation*}
It follows from \fullref{TodaThm} that the radical of $H^*(BF_4;\ff_3)$
is the ideal generated by $x_9$, $x_{21}$, $x_{25}$ and then $H^* (BF_4;
\ff_3)/\sqrt 0$ is generated by classes
\begin{equation*}
x_4, x_8, x_{20}, x_{26}, x_{36}, x_{48}
\end{equation*}
with the relations
\begin{align*}
x_4x_{26}&=x_8x_{26}=x_{20}x_{26}=0,\\
x_{20}^3&=x_{48}x_4^3+x_{36}x_8^3-x_{20}^2x_8^2x_4.
\end{align*}
Furthermore, 
\begin{enumerate}[(1)]
\item The restriction map
$$\res_T\colon H^*(BF_4;\ff_3) \longrightarrow H^*(BT;\ff_3)^{W(F_4)}$$
factors through
$$\overline\res_T\colon H^*(BF_4;\ff_3)/\sqrt0  \longrightarrow
H^*(BT;\ff_3)^{W(F_4)},$$
mapping the classes 
$x_4$, $x_8$, $ x_{20}$, $ x_{36}$, and $ x_{48}$ to $p_1$, $\bar p_2$,
$\bar p_5$, $\bar p_9$, and $\bar
p_{12}$, respectively.

\item The ideal generated by $x_4$, $x_8$, $x_{20}$ is closed under
the action of the Steenrod reduced power operations, hence, dividing
out by this ideal we are left with a polynomial algebra $\ff_3
[x_{26},x_{36},x_{48}]$ with the following Steenrod algebra action:
\begin{align*}
\pp^1 x_{26}&=0 & \pp^1 x_{36}&=0 & \pp^1 x_{48}&=x_{26}^2\\[-1ex]
\pp^3 x_{26}&=0 & \pp^3 x_{36}&=x_{48} & \pp^3 x_{48}&=0 \\[-1ex]
\pp^9 x_{26}&=x_{36}x_{26} & \pp^9 x_{36}&=-x_{36}^2 &
  \pp^9 x_{48}&=-x_{48}x_{36}
\end{align*}
It turns out that this polynomial algebra is isomorphic, as algebras
over the Steenrod algebra, to $P[u_1,u_2,u_3]^{SL_3(\ff_3)}$ (see
\fullref{exemple2}). Call $\varphi$ the projection 
\begin{equation*}
\varphi \colon H^* (BF_4; \ff_3)/\sqrt 0  \longrightarrow
P[u_1,u_2,u_3]^{SL_3(\ff_3)}.
\end{equation*}
It is a homomorphism of algebras over the Steenrod algebra of reduced
powers with $\varphi (x_{26})=L_3$, $\varphi (x_{36})=Q_{3,2}$, $\varphi
(x_{48})=Q_{3,1}$, using the notation of \fullref{exemple2}.

\item Similarly, if we further divide out by $x_{26}$, 
the quotient $P
[x_{36},x_{48}]$ 
can be identified, as algebras over the Steenrod algebra with the 
subalgebra of
$$P[x_1,x_2]^{GL_2(\ff_3)}=P[Q_{2,1},Q_{2,2}]$$
generated by $Q_{2,1}^3$ and $Q_{2,2}^3$ (see \fullref{exemple1}). We
can then check that 
\begin{multline}\label{Rector-formula}
H^* (BF_4; \ff_3)/\sqrt 0  \cong \\[-1ex]
P[x_{26},x_{36},x_{48}]\prod_{P[x_{36},x_{48}]}
\dfrac{P [x_4,x_8,x_{20},x_{36},x_{48}]}{(x_{20}^3=x_{48}x_4^3
+ x_{36}x_8^3 - x_{20}^2x_8^2x_4) }
\end{multline}
or, in other words, it
fits in the pull-back diagram of algebras over the 
Steenrod algebra of reduced powers:
\begin{equation}\label{Rector-diagram}
\xymatrix{
H^* (BF_4; \ff_3)/\sqrt 0 \ar[r]^\varphi \ar[d]_{\overline\res_T} & P[u_1,u_2,u_3]^{SL_3(\ff_3)} 
                                \ar[d]^{\varsigma} \\ 
  H^* (BT;\ff_3)^{W(F_4)} \ar[r]^{\varrho}& P[x_1,x_2]^{GL_2
(\ff_3)} \rlap{,}}
\end{equation}
where
\begin{align*}
\varrho\circ\overline\res_T( x_{36} ) &= \varrho( \bar p_9)=Q_{2,1}^3 &
\varrho\circ\overline\res_T( x_{48} ) &= \varrho( \bar p_{12})= Q_{2,2}^3
\\[-1ex]
\varsigma(Q_{3,2})&=Q_{2,1}^3 &
\varsigma(Q_{3,1})&=Q_{2,2}^3,
\end{align*}
while other generators are mapped trivially. 
\end{enumerate}

Our aim now is to extend the above diagram to one that captures the
whole structure of $H^* (BF_4; \ff_3)$. The main theoretical tool is
the nil-localization functor for algebras over the Steenrod algebra
(see Broto and Zarati \cite{BroZar} and Schwartz \cite{Schwartz}).

Let $\uu$ be the category of unstable modules over the Steenrod algebra and  
let $\kk$ be the category of unstable algebras over the Steenrod algebra. 
A morphism $f\colon R\to S$ of $\uu$ or  $\kk$ is called a nil-equivalence if
the induced map $\Hom_{\uu}(S,H^*V) \to \Hom_{\uu}(R,H^*V)$ is a bijection
for any elementary abelian $p$--group $V$, and $H^*V=H^*(BV,\F_p)$.
Given an object
$K$ of $\uu$, its nil-localization is another object $\nn_{\uu}^{-1}(K)$ of $\uu$ together with a
nil-equivalence
$\mu_K\colon K\to \nn_{\uu}^{-1}(K)$  which is final among nil-equivalences
with source $K$. If $K$ is an object of $\kk$, then so is
$\nn_{\kk}^{-1}(K)=\nn_{\uu}^{-1}(K)$ and the universal map
$\mu_K$ is a morphism of $\kk$ . We will say that $K$ is reduced
 if $\mu_K$ is injective and nil-closed if $\mu_K$ is an isomorphism. 

The Quillen map for a compact Lie group $G$, expressed as restriction from 
$H^*(BG,\F_p)$ to the inverse limit of cohomologies of elementary abelian 
$p$--subgroups $E$ of $G$ and morphisms induced by conjugation in $G$,
$$q_G\colon H^*(BG;\F_p)\longrightarrow \lim_{E\in\mathcal{E}_p(G)}H^*(BE;\F_p)$$
turns out to be the nil-localization of $H^*(BG,\F_p)$. Thus, the Adams conjecture can be rephrased
by saying that for a compact and connected Lie group $G$ and an odd prime $p$, $H^*(BG;\F_p)$
is a reduced object of $\kk$.

By applying the nil-localization functor to Rector's diagram \eqref{Rector-diagram} we obtain
our main result, that proves the conjecture of Adams for $G=F_4$ and $p=3$.

\begin{thm}\label{main} There is a pull-back 
diagram:
\begin{equation}\label{new-diagram}
\xymatrix{
 \nn_{\kk}^{-1}\bigl(H^*(BF_4;\ff_3)\bigr) \ar[r]\ar[d]&  H^*(BV_3;\ff_3)^{SL_3(\ff_3)} \ar[d] \\
  H^* (BV_4;\ff_3)^{W(F_4)} \ar[r] &  H^*(BV_2; \ff_3)^{GL_2(\ff_3)}
}
\end{equation}
and the nil-localization $\mu\colon H^*(BF_4;\ff_3)\to \nn_{\kk}^{-1}\bigl(H^*(BF_4;\ff_3)\bigr)$ 
is injective.
\end{thm}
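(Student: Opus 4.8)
The strategy is to produce \eqref{new-diagram} by applying the nil-localization functor to Rector's pull-back square \eqref{Rector-diagram}, and then to obtain the injectivity of $\mu$ by confronting Toda's presentation (\fullref{TodaThm}) with the invariant theory of \fullref{exemple2}. Recall from the remarks preceding the theorem that $\nn_\kk^{-1}\bigl(H^*(BF_4;\ff_3)\bigr)$ is the target of the Quillen map and that $\mu=q_{F_4}$; so the theorem breaks into (A) identifying $\lim_{E\in\mathcal{E}_3(F_4)}H^*(BE;\ff_3)$ with the displayed fibre product, and (B) showing $q_{F_4}$ is injective. In part (A) I work over the subalgebra of reduced power operations, where $\sqrt0=(x_9,x_{21},x_{25})$ is stable and Rector's square genuinely lives; the return to the full Steenrod algebra is harmless, since $\ker q_{F_4}$ and the algebra $\lim_E H^*(BE;\ff_3)$ are intrinsic to the underlying map of algebras, and each corner of the resulting square carries its canonical $\kk$-structure.

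For (A) I would invoke two formal properties of nil-localization from \cite{BroZar, Schwartz}: it is left exact, hence preserves pull-backs and commutes with invariants under a finite group; and for an elementary abelian $3$-group $V$ the inclusion of the even (polynomial) part $\ff_3[u_1,\dots,u_k]\hookrightarrow H^*(BV;\ff_3)$ is a nil-equivalence, because it is injective, its cokernel is the ideal of the degree-one classes (a nilpotent unstable module), and $H^*(BV;\ff_3)$ is nil-closed. Since the projection $H^*(BF_4;\ff_3)\to H^*(BF_4;\ff_3)/\sqrt0$ is a nil-equivalence ($\sqrt0$ being a nilpotent module, as the radical of the Noetherian unstable algebra $H^*(BF_4;\ff_3)$ over the reduced powers; see \cite{Schwartz}), applying $\nn_\kk^{-1}$ to \eqref{Rector-diagram} turns the three ``small'' corners into $\nn_\kk^{-1}\bigl(H^*(BT;\ff_3)^{W(F_4)}\bigr)=H^*(BV_4;\ff_3)^{W(F_4)}$, $\nn_\kk^{-1}\bigl(P[u_1,u_2,u_3]^{SL_3(\ff_3)}\bigr)=H^*(BV_3;\ff_3)^{SL_3(\ff_3)}$ and $\nn_\kk^{-1}\bigl(P[x_1,x_2]^{GL_2(\ff_3)}\bigr)=H^*(BV_2;\ff_3)^{GL_2(\ff_3)}$, where $V_4=T[3]$, $V_3$ is the rank-three non-toral $3$-subgroup of $F_4$ identified by Rector \cite{Rector}, $V_2$ is the common rank-two subgroup, and the maps are the restrictions along $V_2\hookrightarrow V_3$ and $V_2\hookrightarrow V_4$. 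Left exactness then yields the pull-back square \eqref{new-diagram}.

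For (B) one observes that, since $\nn_\kk^{-1}\bigl(H^*(BF_4;\ff_3)\bigr)$ is a fibre product, an element of $\ker\mu$ restricts to zero on both $V_4$ and $V_3$, and conversely; hence $\ker\mu=\ker(\res_{V_4})\cap\ker(\res_{V_3})$. As $V_4=T[3]\subset T$ and $H^*(BT;\ff_3)\to H^*(BV_4;\ff_3)$ is injective, $\ker(\res_{V_4})=\ker(\res_T)$, which by \fullref{TodaThm} and Rector's computation of $H^*(BF_4;\ff_3)/\sqrt0$ (equation \eqref{Rector-formula}) is the ideal $(x_9,x_{21},x_{25},x_{26})$. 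So it remains to prove that $\res_{V_3}$ is injective on this ideal. The plan is: starting from $\varphi(x_{26})=L_3$ and working backwards along the chain of Bocksteins and reduced powers that presents $x_{26}$ from $x_{25},x_{21},x_{20}$ (with $x_{25}=\pp^1 x_{21}$, $x_{21}=\pp^3 x_9$, $x_9=\beta x_8$, and $\pp^1 x_4=x_4^2-x_8$), deduce that $\res_{V_3}$ is non-zero on each of $x_4,x_8,x_9,x_{20},x_{21},x_{25},x_{26}$, matching these --- using that the relevant graded pieces of $H^*(BV_3;\ff_3)^{SL_3(\ff_3)}$ are one-dimensional (the Dickson generators $L_3,Q_{3,2},Q_{3,1}$ having degrees $26,36,48$) --- with unit multiples of $M_{2,3},M_{1,3},M_3,M_{1,2},M_2,M_1,L_3$ from \fullref{exemple2}; then, describing $(x_9,x_{21},x_{25},x_{26})$ as a module over $\ff_3[x_4,x_8,x_{20},x_{36},x_{48}]$ by means of Toda's relations and using that $H^*(BV_3;\ff_3)^{SL_3(\ff_3)}$ is a free module over its polynomial part $P[u_1,u_2,u_3]^{SL_3(\ff_3)}$ on the classes $\{M_I\}_I$ (the affirmative answer to the Question in this case, via Mui \cite{Mui}), verify that the $\res_{V_3}$-images of a generating set of the ideal remain independent over the image of the polynomial part.

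I expect part (B) --- in particular, organising the ideal $(x_9,x_{21},x_{25},x_{26})$ as an explicit module, pinning down $\res_{V_3}$ and $\res_{V_2}$ on the nilpotent classes, and extracting the independence statement over $P[u_1,u_2,u_3]^{SL_3(\ff_3)}$ --- to be the real work; the construction of \eqref{new-diagram} is essentially formal once the reduced-power versus full Steenrod bookkeeping and the three corner computations are settled, and the remaining inputs (Examples \ref{exemple1} and \ref{exemple2}, Quillen \cite{Quillen}, Toda \cite{Toda}, Rector \cite{Rector}) are already at hand.
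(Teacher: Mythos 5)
Your overall architecture coincides with the paper's: the square \eqref{new-diagram} is obtained by nil-localizing Rector's square \eqref{Rector-diagram} after identifying the nil-localizations of the three ``small'' corners with the displayed invariant rings, and injectivity amounts to $\ker\res_T\cap\ker\widehat\varphi=0$. Your version of the latter (show $\widehat\varphi$ is injective on the ideal $(x_9,x_{21},x_{25},x_{26})=\ker\res_T$) is the mirror image of the paper's (compute $\ker\widehat\varphi=(x_4^2,x_8^2,x_{20}^2,x_{20}x_8,x_{20}x_4,x_8x_4)$ and observe that it lies in the subalgebra generated by $x_4,x_8,x_{20},x_{36},x_{48}$, which is detected by $\res_T$), and both rest on the same determination of $\widehat\varphi$ on generators via the Steenrod linkage to $x_{26}$ and \fullref{exemple2}.

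The genuine gap is the step you declare ``harmless'': the passage between the reduced powers and the full Steenrod algebra. The projection $H^*(BF_4;\ff_3)\to H^*(BF_4;\ff_3)/\sqrt0$ is not a morphism of $\kk$ --- indeed $\sqrt0=(x_9,x_{21},x_{25})$ is not even an $\A$--submodule, since $\beta x_{25}=x_{26}\notin\sqrt0$ --- so it cannot be called a nil-equivalence in $\kk$, and one cannot simply ``apply $\nn_{\kk}^{-1}$'' to Rector's square and claim the resulting top-left corner is $\nn_{\kk}^{-1}\bigl(H^*(BF_4;\ff_3)\bigr)$ without a bridge; and since the theorem is a statement about $\nn_{\kk}^{-1}$ over the full Steenrod algebra (the target of the Quillen map), the full $\A$--structure cannot be discarded for the pull-back part, only for injectivity. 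The paper supplies exactly this bridge with the functor $\tilde{O}\colon\kk\to\kk'$: $j\colon\tilde OK\to K$ is an injective nil-equivalence, and $\kappa_K\colon\tilde OK\to K/\sqrt0$ is a morphism of $\kk'$ and a nil-equivalence (its kernel is the radical of $\tilde OK$, its cokernel is nilpotent because $p$th powers lie in $\tilde OK$); combined with the facts that $\tilde O$ commutes with inverse limits and that inverse limits of nil-closed objects are nil-closed, this is what legitimizes nil-localizing Rector's square. The same device is what pins down $\widehat\varphi$ at the start of your part (B): the paper deduces $\widehat\varphi(x_{26}^3)=L_3^3$ (and hence the sign in $\widehat\varphi(x_4)=+m_4$) from the fact that cubes of even classes lie in $\tilde OH^*(BF_4;\ff_3)$, whereas you assume outright that $\widehat\varphi$ is computed by $\varphi$ modulo nilpotents --- which is the same unproved compatibility. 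So either import $\tilde O$ (or an equivalent reduced-powers-to-$\kk$ comparison) explicitly, or your part (A) and the starting point of part (B) remain assertions rather than proofs; with that ingredient added, the rest of your outline goes through and matches the paper.
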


The extension of diagram \eqref{Rector-diagram} to \eqref{new-diagram}
requires the fact that the nil-localization of an object $K$ of $\kk$
coincides with that of $K/\sqrt0$. Notice, though, that the natural
projection $K\to K/\sqrt0$ is not in general a morphism of $\kk$. It
might not commute with the action of the Bockstein operator.  Indeed,
in our case, $x_{25}$ is in the radical of $H^*(BF_4;\ff_3)$ but $\beta
x_{25}=x_{26}$ is not nilpotent. In order to overcome this difficulty we
introduce $\kk'$, the full subcategory of objects of $\kk$ concentrated
in even degrees and the right adjoint functor $\tilde{O}\colon \kk \to
\kk'$ of the inclusion functor, described for any $K$ of $\kk$ as the
subalgebra of even degree elements anihilated by the right ideal of
the Steenrod algebra generated by the Bockstein operator (see Broto and
Zarati \cite{BroZar} and Schwartz \cite{Schwartz}).
This adjoint pair provides a natural map $j\colon \tilde O K\to K$,
and the composition
\begin{equation*}
\kappa_K\colon \tilde{O} K \to K \to K/\sqrt 0
\end{equation*}
is clearly a morphism of $\kk'$. Moreover, it is a nil-equivalence.
In fact,  $j\colon \tilde O K\to K$ is always injective and a
nil-equivalence, so the kernel of $\kappa_K$ is the radical of $\tilde O
K$. An element in the cokernel of $\kappa_K$ is represented by an element
of $K$, but the $p$th power of any element of $K$ belongs to $\tilde OK$,
hence this cokernel is also nilpotent, so what we obtain is a diagram
of nil-equivalences
$$\bfig\barrsquare<800,400>[\tilde O K` K` K{/}\sqrt0` \nn_{\kk}^{-1}K;
  j` \kappa_K` \mu_K` \mu_{K{/}\sqrt0}]\efig$$
Notice that a nil-equivalence between objects of $\kk'$ is precisely
an (F)--isomor\-phism in the sense of Quillen \cite{Quillen}.  If $K$
is a nil-closed object, then  $\kappa_K$ is an isomorphim and $j\colon
K/\sqrt0\cong\tilde OK\to K$ is the nil-localization.

\begin{proof}[Proof of \fullref{main}]
For an elementary abelian $p$--group $V$ and $G$ a subgroup of $GL(V)$,
$H^*(BV,\F_p)$ is nil-closed and then
$$S(V^*)\cong H^*(BV,\F_p)/\sqrt0\cong\tilde O H^*(BV,\F_p).$$
Since $\tilde O$ commutes with inverse limits and the inverse limit
of nil-closed objects is nil-closed, we also have that $S(V^*)^G\cong
\tilde O H^*(BV,\F_p)^ G$ and the inclusion $S(V^*)^G \to H^*(BV,\F_p)^
G$ is the nil-localization of $S(V^*)^G$.

Similarly, the  inverse limit of a functor $c\in\cc\mapsto
H^*(V_c;\F_p)^{G_c}\in \kk$ is  nil-closed and $\tilde O \lim_{c\in\cc}
H^*(V_c;\F_p)^{G_c} = \lim_{c\in\cc} S(V_c^*)^{G_c}$, hence if $L=
\lim_{c\in\cc} S(V_c^*)^{G_c}$, then $\nn_{\kk}^{-1}L= \lim_{c\in\cc}
H^*(V_c;\F_p)^{G_c} $.  This applies to the pull-back diagram
\eqref{Rector-diagram} and proves that \eqref{new-diagram} in the
statement of the theorem is again a pull-back diagram.

We will identify the composition of $\mu\colon H^*(BF_4;\F_3) \to
\nn_{\kk}^{-1} \bigl(  H^*(BF_4;\F_3)  \bigr)$ with each of the maps in
diagram \eqref{new-diagram} to the cohomology of an elementary abelian
$3$--subgroup.

\begin{enumerate}[(1)]
\item $H^*(BF_4;\ff_3) \to H^*(BV_4;\ff_3)^{W(F_4)}$. This
map clearly factors as
\begin{equation*} H^*(BF_4;\ff_3)
\stackrel{\res_T}{\to} H^*(BT;\ff_3)^{W(F_4)} \to
H^*(BV_4;\ff_3)^{W(F_4)}
\end{equation*}
and the kernel is the ideal of $H^*(BF_4;\ff_3)$ generated by $x_9$,
$x_{21}$, $x_{25}$, $x_{26}$.

\item $\widehat\varphi\colon H^*(BF_4;\ff_3)\to
H^*(BV_3;\ff_3)^{SL_3(\ff_3)}$ is defined as the composition
\begin{equation*}
H^*(BF_4;\ff_3)\to \nn_{\kk}^{-1}(H^*(BF_4;\ff_3))\to
H^*(BV_3;\ff_3)^{SL_3(\ff_3)}
\end{equation*} obtained by applying the nil-localization functor to 
$\varphi$: 
\begin{equation*}
\xymatrix{ \tilde{O} H^*(BF_4;\ff_3) \ar[r] \ar[d] &
H^*(BV_3;\ff_3)/\sqrt0
\ar[r]^-\varphi \ar[d] & S(V_3^*)^{SL_3(\ff_3)}\ar[d]\\
 H^*(BF_4;\ff_3) \ar[r]& \nn_{\kk}^{-1}
 H^*(BF_4;\ff_3) \ar[r]& H^*(BV_3;\ff_3)^{SL_3(\ff_3)} }
\end{equation*}

Notice that $p$th powers of even diamensional elements in an
object $K$ of $\kk$ belong to $\tilde O(K)$. In particular, 
$x_{26}^3, x_{36}^ 3,x_{48}^3$ belong to $\tilde{O} H^*(BF_4;\ff_3) $, thus 
they are mapped to $L_3^ 3, Q_{3,2}^ 3, Q_{3,1}^ 3\in H^*(BV_3;\ff_3)^{SL_3(\ff_3)} $, 
respectively by $\widehat\varphi$.

Now, the other generators of $H^*(BF_4;\ff_3)$, 
 $x_{25}, x_{21}, x_{20}, x_{9}, x_{8}, x_4$ are linked by Steenrod operations
to $x_{26}$ (see \fullref{TodaThm}),  hence they can not be in the kernel of 
$\widehat\varphi$. 

The invariant ring  $H^*(BV_3;\ff_p)^{SL_3(\ff_p)}$ is described in \fullref{exemple2}. If $p=3$,
besides the polynomial generators $L_3$, $ Q_{3,2}$ and $Q_{3,1}$, we have
$m_3=M_{1,2,3}$, $m_4=\beta m_3 = M_{2,3}$, $m_8=-\pp^1m_4 = M_{1,3}$, 
$m_{20}=\pp^p m_8 = M_{1,2}$, $m_9 = \beta m_8 = M_3$, $m_{21}=\pp^pm_9 = M_2$, 
and $m_{25}=\pp^1m_{21} = M_1$. Recall also that $\beta m_{25} = L_3$. Here the 
subindices of the lowercase $m$'s indicate the degree in which they appear. 

It follows that $\widehat\varphi(x_4)$ can only be $\pm m_4$ and since $\widehat\varphi(x_{26}^3)=
L_3^ 3$, it has to be $+m_4$, and
$$\widehat\varphi \colon H^* (BF_4;\ff_23) \longrightarrow
  H^*(BV_3;\ff_3)^{SL_3(\ff_3)}$$
maps
\begin{align*}
x_4 &\mapsto m_4 & x_8 &\mapsto m_8 & x_9 &\mapsto m_9\\
x_{20} &\mapsto m_{20} & x_{21} &\mapsto m_{21} & x_{25} &\mapsto m_{25}\\
x_{26} &\mapsto L_3 & x_{36} &\mapsto Q_{3,2} & x_{48} &\mapsto Q_{3,1}
\end{align*}
\end{enumerate}

It is now routine to check that 
\begin{equation*}
\ker \widehat\varphi =(x_4^2,x_8^2, x_{20}^2, x_{20}x_8, x_{20}x_4, x_8x_4 )
\end{equation*}
and that this ideal is contained in the subalgebra of 
$H^*(BF_4;\ff_3)$ generated by 
$x_4,\allowbreak x_8,\allowbreak x_{20},\allowbreak x_{36},\allowbreak x_{48}$ which is detected in  
$H^*(BT;\ff_3)^{W(F_4)}$, hence the composition 
\begin{equation*}
H^*(BF_4;\ff_3) \To \nn_{\kk}^{-1}\bigl(H^*(BF_4;\ff_3)\bigr) \To
H^*(BT;\ff_3)^{W(F_4)} \times 
H^*(BV_3;\ff_3)^{SL_3(\ff_3)}
\end{equation*}
given by $\res_T$ and $\widehat\varphi$ is injective.
\end{proof}

The map $\widehat\varsigma\colon H^*(BV_3;\ff_3)^{SL_3(\ff_3)} \to H^*(BV_2;\ff_3)^{GL_2(\ff_3)}$, obtained as extension of 
$\varsigma $ in diagram \eqref{Rector-diagram}, 
maps $m_3$ trivially (by degree reasons) and therefore $m_4,\allowbreak
m_8,\allowbreak m_9,\allowbreak
m_{29}, \allowbreak m_{21}, m_{25}, L_3$ are mapped trivially, too, and
the image of $\widehat\varsigma$ is $P[x_{36},x_{48}]$, which coincides
with the image of $\varsigma$. We can therefore express the mod $3$
cohomology of $BF_4$ as the pull-back diagram
\begin{equation*}
\xymatrix{H^* (BF_4;\ff_3) \ar[d]_{\res_T}\ar[r]^-{\widehat\varphi} & \im
\widehat\varphi \ar[d]^{\widehat\varsigma\big|_{\im \widehat\varphi }} & \hspace{-1.5cm}\subset H^*(BV_3;\ff_3)^{SL_3(\ff_3)} \\
H^* (BT;\ff_3)^{W(F_4)} \ar[r]^{\qua\qua\varrho} & P[x_{36},x_{48}]  &}
\end{equation*}
where $\im \widehat\varphi $ is the subalgebra of 
$H^*(BV_3;\ff_3)^{SL_3(\ff_3)} $ generated by 
$m_4$, $m_8$, $ m_9$, $ m_{29}$, $ m_{21}$, $
m_{25}$, $ L_3$, $ Q_{3,2}$, $ Q_{3,1}$, thus leaving in the cokernel only
$\coker  \widehat\varphi \cong m_3P[Q_{3,2},  Q_{3,1}]$, or,  in other words,
$$H^* (BF_4;\ff_3)  \cong \im \widehat\varphi\prod_{ P[x_{36},x_{48}]  }
H^* (BT;\ff_3)^{W(F_4)} \,,  $$
which we think  is the correct way to understand this cohomology ring as algebra over the Steenrod
algebra.

\bibliographystyle{gtart}
\bibliography{link}

\begin{thebibliography}{}
\providecommand\bibmarginpar{\leavevmode\marginpar}
\def\urlstyle#1{{\tt #1}}

\bibitem{Benson}
\textbf{D\,J Benson}, \emph{Polynomial invariants of finite groups}, London
  Mathematical Society Lecture Note Series 190, Cambridge University Press,
  Cambridge (1993) \xox{MR}{1249931}

\bibitem{Borel2}
\textbf{A Borel}, \emph{Topology of {L}ie groups and characteristic classes},
  Bull. Amer. Math. Soc. 61 (1955) 397--432 \xox{MR}{0072426}

\bibitem{Borel1}
\textbf{A Borel}, \emph{Topics in the homology theory of fibre bundles},
  Lectures given at the University of Chicago 1954, Springer, Berlin (1967)
  \xox{MR}{0221507}

\bibitem{Bourbaki}
\textbf{N Bourbaki}, \emph{\'{E}l\'ements de math\'ematique. {F}asc. {XXXIV}.
  {G}roupes et alg\`ebres de {L}ie. {C}hapitres IV--VI}, Actualit\'es
  Scientifiques et Industrielles 1337, Hermann, Paris (1968) \xox{MR}{0240238}

\bibitem{Bro2}
\textbf{C Broto}, \emph{Car\`{a}cters lineals i fibracions esf\`{e}riques}, PhD
  thesis, Universitat Aut\`{o}noma de Barcelona (1988)

\bibitem{Bro1}
\textbf{C Broto}, \emph{Sobre la cohomolog\'{\i}a mod $3$ de $BF_4$}, from:
  ``Actas del iv Seminario de Topologia'' (1988)  7--10

\bibitem{BSS}
\textbf{C Broto}, \textbf{L Smith}, \textbf{R Stong},
  \href{http://dx.doi.org/10.1016/0022-4049(89)90104-7} {\emph{Thom modules and
  pseudoreflection groups}}, J. Pure Appl. Algebra 60 (1989) 1--20
  \xox{MR}{1014604}

\bibitem{BroZar}
\textbf{C Broto}, \textbf{S Zarati},
  \href{http://dx.doi.org/10.1007/BF01161641} {\emph{Nil-localization of
  unstable algebras over the {S}teenrod algebra}}, Math. Z. 199 (1988) 525--537
  \xox{MR}{968318}

\bibitem{Dickson}
\textbf{L\,E Dickson}, \emph{A fundamental system of invariants of the general
  modular linear group with a solution of the form problem}, Trans. Amer. Math.
  Soc. 12 (1911) 75--98 \xox{MR}{1500882} \xox{JFM}{42.0136.01}

\bibitem{KonoYagita}
\textbf{A Kono}, \textbf{N Yagita},
  \href{http://links.jstor.org/sici?sici=0002-9947(199310)339:2%3C781:BAOCTO%3%
E2.0.CO%3B2-%23} {\emph{Brown-{P}eterson and ordinary cohomology theories of
  classifying spaces for compact {L}ie groups}}, Trans. Amer. Math. Soc. 339
  (1993) 781--798 \xox{MR}{1139493}

\bibitem{MSTT}
\textbf{M Mimura}, \textbf{Y Sambe}, \textbf{M Tezuka}, \textbf{H Toda},
  \emph{Cohomology mod $3$ of the classifying space of the exceptinal Lie group
  of type $E_6$ I}, (in preparation)

\bibitem{MimTod}
\textbf{M Mimura}, \textbf{H Toda}, \emph{Topology of {L}ie groups {I}, {II}},
  Translations of Mathematical Monographs 91, American Mathematical Society,
  Providence, RI (1991) \xox{MR}{1122592}

\bibitem{Mui}
\textbf{H {M\`ui}}, \emph{Modular invariant theory and cohomology algebras of
  symmetric groups}, J. Fac. Sci. Univ. Tokyo Sect. IA Math. 22 (1975) 319--369
  \xox{MR}{0422451}

\bibitem{Quillen}
\textbf{D Quillen},
  \href{http://links.jstor.org/sici?sici=0003-486X(197111)2:94:3%3C549:TSOAEC%%
3E2.0.CO%3B2-W} {\emph{The spectrum of an equivariant cohomology ring {I},
  {II}}}, Ann. of Math. $(2)$ 94 (1971) 549--572; 573--602 \xox{MR}{0298694}

\bibitem{Rector}
\textbf{D\,L Rector}, \href{http://dx.doi.org/10.1016/0022-4049(84)90051-3}
  {\emph{Noetherian cohomology rings and finite loop spaces with torsion}}, J.
  Pure Appl. Algebra 32 (1984) 191--217 \xox{MR}{741965}

\bibitem{Schwartz}
\textbf{L Schwartz}, \emph{Unstable modules over the {S}teenrod algebra and
  {S}ullivan's fixed point set conjecture}, Chicago Lectures in Mathematics,
  University of Chicago Press, Chicago, IL (1994) \xox{MR}{1282727}

\bibitem{LSmith}
\textbf{L Smith}, \emph{Polynomial invariants of finite groups}, Research Notes
  in Mathematics 6, A K Peters Ltd., Wellesley, MA (1995) \xox{MR}{1328644}

\bibitem{Solomon}
\textbf{L Solomon},
  \href{http://projecteuclid.org/getRecord?id=euclid.nmj/1118801157}
  {\emph{Invariants of finite reflection groups}}, Nagoya Math. J. 22 (1963)
  57--64 \xox{MR}{0154929}

\bibitem{Stanley}
\textbf{R\,P Stanley}, \emph{Relative invariants of finite groups generated by
  pseudoreflections}, J. Algebra 49 (1977) 134--148 \xox{MR}{0460484}

\bibitem{Toda}
\textbf{H Toda}, \emph{Cohomology {${\rm mod}$} {$3$} of the classifying space
  {$BF\sb{4}$} of the exceptional group $F_4$}, J. Math. Kyoto Univ. 13 (1973)
  97--115 \xox{MR}{0321086}

\bibitem{VavpeticViruel}
\textbf{A Vavpeti{\v{c}}}, \textbf{A Viruel},
  \href{http://dx.doi.org/10.1090/S0002-9947-05-03983-8} {\emph{On the mod
  {$p$} cohomology of {$B{\rm PU}(p)$}}}, Trans. Amer. Math. Soc. 357 (2005)
  4517--4532 \xox{MR}{2156719}

\bibitem{Wilk-Dickson}
\textbf{C\,W Wilkerson}, \emph{A primer on the {D}ickson invariants}, from:
  ``Proceedings of the Northwestern Homotopy Theory Conference (Evanston, Ill.,
  1982)'', Contemp. Math. 19, Amer. Math. Soc., Providence, RI (1983)  421--434
  \xox{MR}{711066}

\bibitem{Wilkerson}
\textbf{C\,W Wilkerson}, \emph{Rings of invariants and inseparable forms of
  algebras over the {S}teenrod algebra}, from: ``Recent progress in homotopy
  theory (Baltimore, MD, 2000)'', Contemp. Math. 293, Amer. Math. Soc.,
  Providence, RI (2002)  381--396 \xox{MR}{1890745}

\end{thebibliography}

\end{document}